\newcommand{\R}{\mathbb{R}}
\newcommand{\N}{\mathbb{N}}
\newcommand{\comp}{\mathbb{C}}
\newcommand{\C}{\mathscr{C}}
\newcommand{\ud}{\,\mathrm{d}}
\let \Re \relax
\DeclareMathOperator{\Re}{Re}
\let \Im \relax
\DeclareMathOperator{\Im}{Im}
\let \div \relax
\DeclareMathOperator{\div}{div}
\DeclareMathOperator{\supp}{supp}
\newcommand{\ovl}[1]{\overline{#1}}
\newtheorem{thm}{THEOREM}[section]
\newtheorem{remark}[thm]{REMARK}
\newtheorem{lemma}[thm]{LEMMA}
\newtheorem{proposition}[thm]{PROPOSITION}
\newcounter{thmbiss}
\author[G.~Lebeau]{Gilles Lebeau}
  \email{\href{mailto:gilles.lebeau@unice.fr}{gilles.lebeau@unice.fr}}
\author[I.~Moyano]{Iv\'an Moyano}
  \email{\href{mailto:im449@dpmms.cam.ac.uk}{im449@dpmms.cam.ac.uk}}
\title[Spectral inequalities for the Schrödinger operator]{Spectral inequalities for the Schrödinger operator}
\begin{document}

\maketitle

\begin{abstract}

In this paper we deal with the so-called ``spectral inequalities'’, which yield a sharp quantification of the unique continuation for the spectral family associated with the 
Schrödinger operator in $ \mathbb{R}^d$
\begin{equation*}
H_{g,V} = \Delta_g + V(x), 
\end{equation*}
where $\Delta_g$ is the 
Laplace-Beltrami operator   with respect to an analytic metric $g$,  which is a perturbation of the Euclidean metric,  
and $V(x)$ a real valued analytic potential  vanishing at infinity.
\end{abstract}

\tableofcontents

\section{Introduction and main result}
\label{sec: Introduction}

Let $g$ be 
a Riemannian metric 
on $\R^d$, $\Delta_g$  the usual Laplacian in the metric $g$, and 
$V= V(x)$ a real potential function, not necessarily short range, such as the typical examples of long-range interactions in scattering theory (cf. \cite[vol.IV Ch.XXX]{HormanderVol4}).
In this paper we prove spectral inequalities for the Schrödinger operator 
\begin{equation*}
H_{g,V} := -\Delta_g + V(x), \qquad \textrm{ in } \R^d,  \quad d \geq 1.
\end{equation*}   Our approach relies on interpolation inequalities, in the spirit of the works \cite{LeRo,JeLe,LeZu}, but adapted to the unbounded case. We will  use  spectral projectors, holomorphic extension arguments,  and suitable interpolation estimates for holomorphic functions.\par 

In the case $V=0$, we recover some classical quantifications of the uncertainty principle dues to Zigmund \cite[pp.202-208]{Zigmund}, Logvenenko and Svereda \cite{LogSereda} and Kovrojkine \cite{KovrojkinePaper,KovrojkineSurvey}, among other references (see Section \ref{sec: uncertainty literature} for further details).

\subsection{Geometric conditions for the observability sets}
Given $R>0$ and $x\in \R^d$, we denote by $B^g_R(x)$ the ball of radius $R$, with respect to the metric $g$ centered in $x$. When $x=0$, we simply write $B^g_R$ and if moreover $g=Id$, one can simply write $B_R$ as usual. We shall work with Lebesgue measurable sets $\omega \subset \R^d$  satisfying the condition
\begin{equation}
\exists R,\delta >0 \quad \textrm{ such that } \quad 
\frac{mes(\omega \cap B^g_R(x))}{mes(B^g_R(x))} \geq \delta, \quad \forall x \in \R^d.
\label{eq:geometriccondition}
\end{equation} Since in the present work the metric $g$ will be asymptotically the flat metric,  (\ref{eq:geometriccondition}) can be replaced by the same condition with the Euclidean metric: 
\begin{equation}
\exists R,\delta > 0, \quad \textrm{ such that } \quad \inf_{x\in \R^d} mes\left\{ t\in \omega, \, |x - t| < R  \right\} \geq \delta.
\label{eq:geometricconditiontechnical}
\end{equation} 

\subsection{Main result: spectral inequality for the Schrödinger operator on $\R^d$ }

Let $g = g_{ij}(x)$ be a Riemannian metric in $\R^d$ and consider 
 the Laplace-Beltrami operator associated to $g$, i.e., 
\begin{equation}
\Delta_g u = \frac{1}{\sqrt{ \det g }} \div \left( \sqrt{\det g} g^{-1} \nabla u  \right), 
\label{eq:LaplaceBeltrami}
\end{equation} where $g^{-1}(x) = (g^{ij})(x)$ denotes as usual the inverse metric of $g$. Given $V=V(x)$ a real-valued potential function, one defines the associated Schrödinger operator 
\begin{equation}
H_{g,V}:= -\Delta_g + V(x), \qquad  \textrm{ on } \ \  D(H_{g,V}),
\label{eq:Schrodinger operator}
\end{equation} where 
\begin{equation*}
D(H_{g,V})= \left\{ u \in L^2(\R^d; \sqrt{\det g} \ud x ); \, H_{g,V}(u) \in L^2(\R^d;\sqrt{\det g} \ud x )  \right\}\ .
\end{equation*} 

We will assume that the metric $g$ and the real-valued potential $V$  satisfy
 the following hypothesis: 
\begin{align}
& \textrm{the metric } g \textrm{ is analytic},
 \textrm{of the form } g = Id + \tilde{g} \    \textrm{with} \lim_{\vert x\vert \rightarrow \infty} \tilde{g}(x)=0,
  \label{eq: metric analytic}\\
& \textrm{the potential } V \textrm{ is analytic, real valued},  \textrm{with} \lim_{\vert x\vert \rightarrow \infty} V(x)=0,
\label{eq: potential analytic}\\
& \exists a >0 \textrm{ such that } g \textrm{ and }V \textrm{ extend holomorphically in the} 
\label{eq: metric and potential holomorphically extended}\\
& \textrm{complex open set defined by } U_a := \left\{ z \in \comp^d; \, |\Im(z)| < a   \right\}, \nonumber \\
& \exists \epsilon \in (0,1) \  \textrm{such that} \  \tilde{g} \ \textrm{ and} \ V  \ \textrm{ satisfy}  
\   \textrm{for}\  \vert \alpha\vert\leq 2  \label{eq: metric perturbation by a symbol}\\
 & | \partial^{\alpha} \tilde{g}(z) |+ | \partial^{\alpha} V(z) |
 \leq C_{\alpha} (1 + |z|)^{ - \epsilon - |\alpha| }, \quad \forall z \in U_a  .  \nonumber 
\end{align}

Under these hypothesis
 one can check (see Proposition \ref{prop:description of the spectrum}) that the Schrödinger operator $H_{g,V}$  is an unbounded self-adjoint operator.  In section \ref{sec: spectral analysis} we will recall some basic facts 
 on the functional calculus of self-adjoint operators. In particular, the spectral projectors $ \Pi_{\mu}(g,V)$ are
 defined in \eqref{eq:spectral projectors}. 
 In this article we will prove that the family of specral projectors $ \Pi_{\mu}(g,V)$
 enjoy a spectral inequality, i.e., an observability inequality on a set $\omega \subset \R^d$ for low frequencies as long as the observability set satisfies (\ref{eq:geometriccondition}). \par

\bigskip
Given $\mu\in \R$, let us introduce
\begin{equation}
\mu^{\frac{1}{2}}_{\pm}:= \sqrt{\mu}, \quad \mu\geq 0, \qquad \mu^{\frac{1}{2}}_{\pm}:= \pm i\sqrt{|\mu|}, \quad \mu <0.
\label{eq: squqre root of E}
\end{equation} 
Our main result is the following:

\begin{thm}
 Let $\omega \subset \R$ be a measurable set satisfying the geometric condition 
 (\ref{eq:geometricconditiontechnical}) and let $(g,V)$ satisfy hypothesis 
 (\ref{eq: metric analytic})--(\ref{eq: metric perturbation by a symbol}). Then, there exist constants $A=A(\omega,g,V)>0$ and $C=C(\omega,g,V)>0$ such that for all $ \mu \in \R $ and all $ f \in L^2(\R^d)$, one has 
\begin{equation}
\left\| \Pi_{\mu}(g,V) f \right\|_{L^2(\R^d;\sqrt{\det g} \ud x)} \leq   A \ 
| e^{C \mu^{\frac{1}{2}}_{\pm} } | \  \| \Pi_{\mu}(g,V) f \|_{L^2(\omega;\sqrt{\det g} \ud x )}. 
\label{eq:spectral inequality potential}
\end{equation} 
\label{thm: Spectral inequality Potential}
\end{thm}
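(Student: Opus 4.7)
The plan is to adapt the Lebeau-Robbiano interpolation strategy to this unbounded setting by combining an elliptic lift to one higher dimension, a holomorphic extension in $x$, and a Kovrojkine-type interpolation inequality on a thick set. By self-adjointness of $H_{g,V}$ (Proposition \ref{prop:description of the spectrum}) it suffices to prove \eqref{eq:spectral inequality potential} for $u = \Pi_\mu(g,V)f$, i.e.\ for $u$ whose spectral support is contained in $(-\infty,\mu]$; moreover, since $g = \id + \tilde g$ with $\tilde g \to 0$, the weight $\sqrt{\det g}$ is bounded above and below and the $L^2$-norms may be taken unweighted.

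Using the holomorphic functional calculus of $H_{g,V}$ applied to the entire function $z \mapsto \cosh(s\sqrt z)$, set
\begin{equation*}
F(s,x) := \bigl(\cosh(s\sqrt{H_{g,V}})\,u\bigr)(x), \qquad (s,x) \in \R \times \R^d.
\end{equation*}
Then $F$ solves the elliptic problem $-\partial_s^2 F + H_{g,V}F = 0$ with $F|_{s=0} = u$ and $\partial_s F|_{s=0} = 0$. Since the spectral support of $u$ lies in $(-\infty,\mu]$, the elementary inequality $|\cosh(s\sqrt\lambda)| \leq |e^{s\mu_\pm^{1/2}}|$ valid for all $\lambda \leq \mu$ together with the spectral theorem gives at once
\begin{equation*}
\|F(s,\cdot)\|_{L^2(\R^d)} \leq \bigl|e^{s\mu_\pm^{1/2}}\bigr|\,\|u\|_{L^2(\R^d)}, \qquad s \in \R.
\end{equation*}
Thanks to \eqref{eq: metric and potential holomorphically extended}--\eqref{eq: metric perturbation by a symbol}, the operator $-\partial_s^2 + H_{g,V}$ has coefficients holomorphic on the tube $U_a$, so an analytic elliptic regularity argument of Morrey-Nirenberg type, quantified by interior Cauchy estimates, extends $F(s,\cdot)$ holomorphically in $x$ to $\R^d + iB_b$ for some $b \in (0,a)$ independent of $\mu$, with a bound of the form
\begin{equation*}
\sup_{|y|<b}\|F(s,\cdot+iy)\|_{L^2(\R^d)} \leq C\,\bigl|e^{s\mu_\pm^{1/2}}\bigr|\,\|u\|_{L^2(\R^d)}, \qquad |s| \leq 1.
\end{equation*}

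The conclusion then follows from a Kovrojkine-type interpolation inequality on the complex tube: provided $\omega$ satisfies \eqref{eq:geometricconditiontechnical}, there exist $\alpha \in (0,1)$ and $C>0$ depending only on $R,\delta,b,d$ such that every $h$ holomorphic on $\R^d + iB_b$ satisfies
\begin{equation*}
\|h\|_{L^2(\R^d)} \leq C\,\|h\|_{L^2(\omega)}^{\alpha}\,\Bigl(\sup_{|y|<b}\|h(\cdot+iy)\|_{L^2(\R^d)}\Bigr)^{1-\alpha}.
\end{equation*}
Such an estimate is obtained by tiling $\R^d$ with cubes of side $\sim R$, applying on each cube a Remez- or Hadamard-three-lines-type inequality to pass from $\omega \cap Q$ to $Q$, and summing, with constants uniform in the basepoint thanks to \eqref{eq:geometricconditiontechnical}. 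Applied to $h(x) = u(x) = F(0,x)$, it yields $\|u\|_{L^2(\R^d)}^{\alpha} \leq C\,\|u\|_{L^2(\omega)}^{\alpha}\,(|e^{\mu_\pm^{1/2}}|\,\|u\|_{L^2(\R^d)})^{1-\alpha}$ after inserting the lift bound, hence \eqref{eq:spectral inequality potential} upon dividing. The principal obstacle will be to carry out this interpolation step with constants uniform in $x$ and to maintain the analytic extension bound on the whole of $\R^d + iB_b$ uniformly in $\mu$, so that the only exponential dependence on $\mu$ in the final estimate is the factor $e^{C\mu_\pm^{1/2}}$ inherited from the elliptic lift; the symbolic decay \eqref{eq: metric perturbation by a symbol} is precisely what allows the Morrey-Nirenberg constants to be taken independent of the basepoint.
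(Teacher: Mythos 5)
Your plan is sound and reproduces the paper's overall architecture — (i) lift the spectrally truncated function to a solution of the elliptic equation $(-\partial_s^2+H_{g,V})F=0$ in one more variable at the cost of a factor $|e^{C\mu_\pm^{1/2}}|$, (ii) extend holomorphically in $x$ to a tube of fixed width with constants uniform in the basepoint, (iii) conclude by an interpolation inequality on the tube for sets satisfying \eqref{eq:geometricconditiontechnical} — but it differs in how each block is realized. For (i) you take $F(s,\cdot)=\cosh(s\sqrt{H_{g,V}})u$ (an even, entire function of the spectral parameter, so negative eigenvalues and the continuous spectrum are handled at once, and $s=0$ is an interior point); the paper instead amplifies first, $h=U^{-1}(e^{s_0\sigma_\pm^{1/2}}Uf)$, and recovers $f=\mathbb{P}_{s_0,\pm}(h)$ at the interior time $s=s_0$ — the two devices are equivalent in effect, and your spectral bound $|\cosh(s\sqrt\lambda)|\le |e^{|s|\mu_\pm^{1/2}}|$ on the spectral support is correct (write $|s|$, since $\cosh$ is even). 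For (ii) you invoke quantitative Morrey--Nirenberg analytic regularity; the paper proves this block (Propositions \ref{prop4}--\ref{prop6}) by a compactness argument within the class $\mathcal Q_{X,D_0,d_0}$, Zerner's Lemma \ref{lemma:Zerner} and a contour deformation, precisely to guarantee constants independent of the translate $Q_w$ — this uniformity over basepoints, plus the summation of the local slice estimates \eqref{gl61} over a lattice to get the global bound \eqref{gl62} in $\mathcal H_b$, is the part your sketch leaves most implicit and is where the real work lies. For (iii) you propose the Remez/Kovrijkine route on cubes followed by H\"older summation; the paper explicitly acknowledges this as a viable alternative and instead gives a self-contained proof of Proposition \ref{proposition:interpolation inequality multiD} via the one-dimensional $\ovl\partial$-Carleman estimate of Lemma \ref{lem1}, Green-function potential theory, a radial slicing argument (Proposition \ref{prop2}), and the same covering/H\"older step. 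What your version buys is brevity and reliance on known quantitative results; what the paper's version buys is explicit, self-contained control of all constants (in particular the exponent $\nu$ through the Green function bound on $C_\mu$) and of the uniformity in $w$ that makes the final estimate depend on $\mu$ only through the lift factor $e^{C\mu_\pm^{1/2}}$.
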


\subsection{A special case: spectral inequality for the Laplacian operator in $\R^d$}
When $g= Id$ and $V=0$ as $H_0 = -\Delta_x$ is  the usual flat Laplacian, it is well-known that  that $\sigma(Id, 0) = [0, \infty)$
is  a purely absolutely continuous spectrum. Furthermore, the spectral projectors are explicitly determined through the Fourier transform, i.e., 

\begin{equation*}
\Pi(Id,0)_{\mu}f = \frac{1}{ (2\pi)^d }\int_{|\xi| < \mu} \hat{f}(\xi) e^{i x \cdot \xi} \ud x, 
\end{equation*} recalling that the classical Fourier transform is defined by
\begin{equation*}
\hat{g}(\xi) := \int_{\R^d} g(x) e^{-i x \cdot \xi} \ud x, \quad \forall \xi \in \R^d, \quad \forall g\in L^2(\R^d). 
\end{equation*} As a result, one can recast (\ref{eq:spectral inequality potential}) as the following familiar spectral inequality.

\begin{thm}
Let $\omega \subset \R$ be a Lebesgue measurable set satisfying the geometric condition (\ref{eq:geometricconditiontechnical}). Then, there exist constants
 $A= A(\omega)>0, C=C(\omega)>0$ such that  for all $ \mu \in \R_+$ and all $ f \in L^2(\R^d)$, one has 
\begin{equation}
\| f \|_{L^2(\R)} \leq A(\omega) e^{C(\omega)\mu} \| f \|_{L^2(\omega)}, \quad \textrm{whenever} \quad \supp \hat{f} \subset \ovl{B_\mu}.
\label{eq:spectarlinequality}
\end{equation} 
\label{thm: Spectral inequality}
\end{thm}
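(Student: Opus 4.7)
The plan is to obtain Theorem~\ref{thm: Spectral inequality} as a direct specialization of Theorem~\ref{thm: Spectral inequality Potential} to the flat case $g = \id$, $V = 0$. First I would verify that this pair trivially satisfies the hypotheses \eqref{eq: metric analytic}--\eqref{eq: metric perturbation by a symbol}: indeed $\tilde g \equiv 0$ and $V \equiv 0$ are analytic, extend holomorphically to all of $\comp^d$, and satisfy the decay condition \eqref{eq: metric perturbation by a symbol} for any $\epsilon$ and any $a$. The operator $H_{\id,0}=-\Delta$ is therefore self-adjoint on $L^2(\R^d)$, with purely absolutely continuous spectrum $[0,\infty)$, and conjugation by the Fourier transform diagonalizes it as multiplication by $|\xi|^2$.

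From this diagonalization, the spectral projector $\Pi_{\lambda}(\id,0)$ for $\lambda\ge 0$ is exactly the Fourier multiplier by $\mathbf{1}_{\{|\xi|^2\le \lambda\}}$, as stated in the paragraph preceding the theorem. Hence the frequency-support condition $\supp \hat f \subset \ovl{B_\mu}$ appearing in \eqref{eq:spectarlinequality} is equivalent to the identity $f = \Pi_{\mu^2}(\id,0)f$. Applying \eqref{eq:spectral inequality potential} with eigenvalue parameter $\lambda = \mu^2 \ge 0$, so that $\lambda^{1/2}_\pm = \mu$ and $|e^{C\lambda^{1/2}_\pm}|=e^{C\mu}$, and observing that the weighted measure $\sqrt{\det g}\,\ud x$ reduces to ordinary Lebesgue measure when $g=\id$, one immediately obtains \eqref{eq:spectarlinequality} with constants $A(\omega) = A(\omega,\id,0)$ and $C(\omega) = C(\omega,\id,0)$.

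The geometric condition \eqref{eq:geometriccondition} coincides with \eqref{eq:geometricconditiontechnical} under $g=\id$, so no translation is required on the hypotheses on $\omega$. All the real content is thus hidden in Theorem~\ref{thm: Spectral inequality Potential}: in the flat setting the deduction above is a one-line substitution and presents no obstacle.

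For completeness, I would note that an independent proof of Theorem~\ref{thm: Spectral inequality} is classical and proceeds along different lines: the Paley--Wiener theorem identifies functions with $\supp\hat f\subset \ovl{B_\mu}$ with entire functions of exponential type~$\mu$, and a Kovrijkine-type good-cube/bad-cube decomposition at the thickness scale $R$ of $\omega$, combined with Bernstein--Remez estimates on each good cube, yields the same bound with an explicit dependence of $C(\omega)$ on $R$ and $\delta$. Since the authors' general result \eqref{eq:spectral inequality potential} already covers this, the cleanest route is the specialization sketched above.
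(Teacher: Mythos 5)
Your specialization argument is correct: the pair $(g,V)=(\id,0)$ trivially satisfies hypotheses \eqref{eq: metric analytic}--\eqref{eq: metric perturbation by a symbol}, the spectral projector $\Pi_{\lambda}(\id,0)$ is the Fourier multiplier by $\mathds{1}_{\{|\xi|^2<\lambda\}}$, and taking $\lambda=\mu^2$ so that $\lambda^{1/2}_{\pm}=\mu$ turns \eqref{eq:spectral inequality potential} into \eqref{eq:spectarlinequality}; the only cosmetic point is that the projector is defined with the strict inequality $\sigma<\lambda$, so $f=\Pi_{\mu^2}(\id,0)f$ for $\supp\hat f\subset\ovl{B_\mu}$ holds because the sphere $\{|\xi|=\mu\}$ has Lebesgue measure zero. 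There is also no circularity, since the paper's proof of Theorem \ref{thm: Spectral inequality Potential} (Sections \ref{sec: spectral analysis}--\ref{sec: End of the proof multiD}) nowhere uses Theorem \ref{thm: Spectral inequality}. However, this is not the route the paper takes: the authors prove Theorem \ref{thm: Spectral inequality} directly from Proposition \ref{proposition:interpolation inequality multiD}, observing via Fourier inversion and Plancherel that a function with $\supp\hat f\subset\ovl{B_\mu}$ extends holomorphically to $U_a$ with $\Vert f(\cdot+iy)\Vert_{L^2(\R^d)}\le e^{\mu|y|}\Vert f\Vert_{L^2(\R^d)}$, so that $\int_{U_a}|f|^2|\ud z|\le c_d a^d e^{2\mu a}\Vert f\Vert^2_{L^2(\R^d)}$ and the interpolation inequality gives the result at once. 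The paper's argument thus isolates the flat case as an elementary consequence of Section \ref{sec: carleman-interpolation} alone, whereas your reduction defers all the work to the full machinery behind Theorem \ref{thm: Spectral inequality Potential} (Poisson operator, analytic elliptic estimates, Zerner's lemma); your route is shorter to state but logically heavier, the paper's is self-contained and makes the mechanism (holomorphic extension with $e^{\mu a}$ growth plus interpolation) explicit, which is also exactly the template later reused for the general case.
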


\section{Context of our results and previous works}

\subsection{Spectral inequalities, Logvenenko-Sereda inequalities and the uncertainty principle}

\subsubsection{Spectral Inequalities}

Given a compact Riemannian manifold $M$ equipped with a metric $g$ the spectral inequalities for $V=0$ have been introduced by the first author and D. Jerison in \cite{JeLe} and also in \cite{LeZu}, among other works (see \cite{LeRLe} and the references therein). \par 

In the non-compact case when $M = \R^d$ with the usual Euclidean metric, the methods of \cite{LeRo} have been extended in \cite{LeRMoyano} in such a way that inequality (\ref{eq:spectarlinequality}) holds whenever $\omega \subset \R^d$ is an open set satisfying the geometric condition
\begin{equation}
\exists R>0, \delta >0 \textrm{ s.t. } \forall y \in \R^d, \, \exists y'\in \R^d \textrm{ with } B(y',R) \subset \omega \textrm{ and } d(y,y') < \delta.  
\label{eq: condition Kolmo}
\end{equation} This condition is thus proven to be sufficient for the null-controllability of the heat equation to hold, as well as for some hypoellipitic equations, like the Kolmogorov equation, arising in kinetic theory \cite{MoyanoSchwartz}. This condition is however far from being necessary (see Section \ref{sec: controllability heat whole space} for details).

\subsubsection{Quantification of the uncertainity principle and the Logvenenko-Sereda inequality}
\label{sec: uncertainty literature}

The classical uncertainty principle in harmonic analysis accounts for the fact that a function cannot be localised both in space and in the frequency variable (cf. \cite[Prop. 10.2, p. 270]{MuscaluSchlag}) : 
\begin{equation*}
\exists C>0, \quad  \left( \int_{\mathbb R} x^2 |f(x)|^2 dx  \right)^{\frac{1}{2}}  \left( \int_{\mathbb R} \xi^2 |\hat{f}(\xi)|^2 d \xi  \right)^{\frac{1}{2}} \geq C\| f \|^2_{L^2(\mathbb  R)}, \quad \forall f \in L^2(\R).
\end{equation*} A version of this inequality due to Amrein and Berthier (cf. \cite{AmreinBerthier}) guarantees that
\begin{equation*}
 \int_{\mathbb R \setminus \omega}  |f(x)|^2 dx  +  \int_{\mathbb R \setminus \Sigma}  |\hat{f}(\xi)|^2 d \xi 
 \geq C \| f \|^2_{L^2(\mathbb  R)}, \quad mes(\omega) + mes(\Sigma) < \infty.
\end{equation*} In the aim of giving more quantitative versions of the uncertainty principle in $\R^d$, one can find very significant literature (see \cite{KovrojkineSurvey} and the references therein for further details).  In particular, Logvinenko and Sereda proved in \cite{LogSereda} that the condition 
\begin{equation}
E \subset \R \textrm{ measurable s.t.} \, \exists \gamma >0,\,  a >0 \, \textrm{ s.t. } \, \frac{mes(E\cap I)}{mes(I)} \geq \gamma
\label{eq:LogSereda condition}
\end{equation} whenever $I$ is an interval of length $a$ is sufficient to ensure that 
\begin{equation}
\forall b\geq 0, \, \exists C = C(\gamma, a, b)>0 \textrm{ s.t.} \int_E |f(x)|^2 \ud x \geq C \|f\|^2_{L^2(\R)}, \textrm{ if } \supp(\hat{f}) \subset (-b, b) . 
\label{eq:LogSereda inequality}
\end{equation} On the other hand, the authors do not  get a sharp estimate  of $C$ with respect to the parameters $a,b,\gamma$. This was achieved  by Kovrijkine \cite{KovrojkinePaper}, where the author proves that 
\begin{equation*}
\exists K>0 \textrm{ such that } C(\gamma, a, b) = \left( \frac{\gamma}{K}  \right)^{K(ab + 1)},
\end{equation*} with $\gamma < K$. Moreover, the Logvenenko-Sereda inequality (\ref{eq:LogSereda inequality}) also holds in any $L^p(\R)$ space with $p\in[1,\infty]$. This is possible by combining the Bernstein's inequality, a suitable Remez-type inequality and some previous results by A. Zigmund in lacunary series \cite[pp. 202-208]{Zigmund}. The same results were obtained by Nazarov in \cite{Nazarov}. We refer to \cite{EgidiSurvey} and the references therein for more details on this subject. \par 

Thus our Theorem \ref{thm: Spectral inequality} (when $V=0$ and $g=Id$) recovers the Logvinenko-Sereda inequality in $\R^d$.

\subsection{Controllability of the parabolic equations in the whole space}
\label{sec: controllability heat whole space}
Some recent work has been concerned with the problem of characterising the sets having \textit{``good observability properties"} in the whole space in the context of the controllability of the heat equation and some other parabolic problems. More precisely, given an open set $\omega \subset \R^d$, for $d\geq 1$, let us consider the heat equation 
\begin{equation}
\left( \partial_t - \Delta_x \right) u = \chi_{\omega} g, \quad (t,x) \in \R_+ \times \R^d,
\label{eq:heat}
\end{equation} where $\chi_{\omega}$ is the characteristic function of $\omega \subset \R^d$ and $g$ is a forcing term, that we call a control, supported in $(0,T) \times \omega$. The small-time null-controllability of (\ref{eq:heat}) in an $L^2$ setting is equivalent to the following property
\begin{equation}
\forall T>0, \, \forall u_0 \in L^2(\R^d), \quad \exists g\in L^2((0,T)\times \omega) \quad \textrm{s.t} \quad  u|_{t=T} = 0,
\label{eq: controllability heat}
\end{equation} where $u$ is the solution of (\ref{eq:heat}) with $u|_{t=0} = u_0$. According to the classical HUM method, the null controllability of (\ref{eq:heat}) is equivalent to the observability for the adjoint system 
\begin{equation}
 \left( \partial_t - \Delta_x \right) \psi = 0, \quad (t,x) \in \R_+ \times \R^d,
\label{eq:heatAdjoint} 
\end{equation} which is equivalent to the following observability inequality:
\begin{equation}
\exists C_{obs}>0 \quad \textrm{s.t. } \forall \psi \in L^2(\R^d), \quad \int_{\R^d} |\psi|_{t=T}|^2 \ud x \leq \int_0^T \int_{\omega} |\psi(t,x)|^2 \ud t \ud x,
\label{eq:ObservabilityIntro}
\end{equation} where $\psi$ solves (\ref{eq:heatAdjoint}) with $\psi|_{t=0} = \psi_0$. \par 

In the recent works \cite{CanZhang, EgidiVeselic, Nakic1}, the authors use the Logvinenko-Sereda inequality to show that \eqref{eq:ObservabilityIntro} holds if and only if $\omega \subset \R^d$ is a measurable set satisfying (\ref{eq:geometriccondition}) for some $R, \delta > 0$.

\subsection{Outline of the paper} 
\label{sec: Outline}

In Section \ref{sec: carleman-interpolation} we give a proof of 
Proposition 
\ref{proposition:interpolation inequality multiD} which is 
a basic interpolation estimate for holomorphic functions defined in a tubular neighborhood of $\mathbb R^d$.
Although one can obtain this type of result like in \cite{KovrojkinePaper} using the Remez inequality
for polynomials as a starting point, we choose instead to present a proof which uses a Carleman estimate for functions
of one complex variable (see Lemma \ref{lem1}).\\
In section \ref{sec: spectral analysis} we recall some facts on the spectral theory of the Schr\" odinger operator and we
introduce the Poisson kernel.\\
In section \ref{sec: holomorphic extension multiD}, we prove estimates on the holomorphic
extension of solutions to the Poisson equation.\\
Finally, in section \ref{sec: End of the proof multiD}, we show that Theorem \ref{thm: Spectral inequality Potential}
is an easy consequence of the previous holomorphic extension estimates.

%%%%%%%%%%%%%%%%%%%%%%%%%%%%%%%%%%%%%%%%%%%%%%%%%%%%%%%%%%%%%%%%%%%%%%%%%%%%%%%%%%%%%%
%%%%%%%%%%%%%%%%%%%%%%%%%%%%%%%%%%%%%%%%%%%%%%%%%%%%%%%%%%%%%%%%%%%%%%%%%%%%%%%%%%%%%%
%%%%%%%%%%%%
%%%%%%%%%%%%              CARLEMAN ESTIMATES
%%%%%%%%%%%%
%%%%%%%%%%%%%%%%%%%%%%%%%%%%%%%%%%%%%%%%%%%%%%%%%%%%%%%%%%%%%%%%%%%%%%%%%%%%%%%%%%%%%%
%%%%%%%%%%%%%%%%%%%%%%%%%%%%%%%%%%%%%%%%%%%%%%%%%%%%%%%%%%%%%%%%%%%%%%%%%%%%%%%%%%%%%%

\section{Carleman estimates and interpolation inequalities}
\label{sec: carleman-interpolation}

Recall that for $a>0$ we denote by $U_a$ the tubular neighborhood of $\mathbb R^d$ in 
$\mathbb C^d$, 
$$U_a=\{z\in \mathbb C^d, \vert Im(z)\vert <a\}.$$
Let $\mathcal H_a$ be the Banach space of
holomorphic functions  $f(x+iy)$ in  $U_a$ such that

$$ \Vert f\Vert_{\mathcal H_a}:=\sup_{\vert y\vert <a} \Vert f(.+iy)\Vert_{L^2(\mathbb R^d)}<\infty.$$
By the Paley-Wiener Theorem, $\mathcal H_a$ is the space of Fourier Transforms of functions
$g\in L^2(\mathbb R^d)$ such that $\sup_{\vert\eta\vert <a} \Vert e^{\eta.y}g(y)\Vert_{L^2(\mathbb R^d)}<\infty$.\\

 The goal of this section is to prove the following result
 
 \begin{proposition}
Let $\omega \subset \R^d$ satisfying the density condition (\ref{eq:geometricconditiontechnical}). Then, there exist constants $C=C(a,\omega)>0$ and $\nu=\nu(a,\omega) \in ]0,1[$ such that 
\begin{equation}
\int_{\R^d} |f|^2 \ud x \leq C \left(  \int_{\omega} |f|^2 \ud x \right)^{\nu} \left(  \int_{U_a} |f|^2 |\ud z| \right)^{1 - \nu},
\label{eq:interpolation inequality multiD}
\end{equation} for any $f \in \mathcal H_a$.
\label{proposition:interpolation inequality multiD}
\end{proposition}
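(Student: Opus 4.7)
The plan is to reduce the multi-dimensional interpolation inequality to a one-dimensional Carleman-based estimate, then bootstrap using a covering of $\R^d$ and Fubini.

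\textbf{Step 1 (1D core inequality).} I would first establish the analogous one-variable result (this is the role of Lemma \ref{lem1}): for any bounded interval $I\subset\R$ and any measurable $E\subset I$ with $|E|\geq \delta |I|$, for all holomorphic $F$ on the strip $S_a=\{|\Im z|<a\}$,
\[
\int_I |F|^2\,\ud x \leq C \Bigl(\int_E |F|^2\,\ud x\Bigr)^{\nu}\Bigl(\int_{S_a} |F|^2\,|\ud z|\Bigr)^{1-\nu},
\]
with $C$, $\nu$ depending only on $a$, $\delta$, $|I|$. The natural way, following the paper's stated approach, is to apply a Carleman estimate with a convex weight $e^{\tau\varphi(y)}$ on a rectangle $I\times(-a,a)$, using that $\partial_{\bar z}F=0$ kills the interior term. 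Optimizing $\tau$ yields the interpolation exponent; the standard trick to get the estimate with $E$ in place of $I$ on the right is a Remez-type or three-lines argument exploiting the density $|E|/|I|\geq\delta$.

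\textbf{Step 2 (Slicing).} Tile $\R^d$ by cubes $Q_k:=kR+[0,R]^d$ for $k\in\Z^d$. Condition \eqref{eq:geometricconditiontechnical} applied at the center of $Q_k$ gives $|\omega\cap Q_k|\geq \delta' R^d$ for some $\delta'=\delta'(\delta,R)>0$ independent of $k$. Writing $x=(t,x')\in\R\times\R^{d-1}$ and applying Fubini,
\[
\delta' R^d \;\leq\; \int_{\pi Q_k} \bigl| \omega_{x'}\cap I_k \bigr|\, \ud x',\qquad \omega_{x'}:=\{t:(t,x')\in\omega\},
\]
where $I_k$ is the $x_1$-projection of $Q_k$. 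By Chebyshev, a set $G_k\subset\pi Q_k$ of $(d-1)$-measure at least $\tfrac{\delta'}{2}R^{d-1}$ consists of slices $x'$ for which $|\omega_{x'}\cap I_k|\geq \tfrac{\delta'}{2}R$; this is exactly the density hypothesis needed in 1D for Step 1.

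\textbf{Step 3 (Assembly).} For each such slice, the partial function $F_{x'}(z_1):=f(z_1,x')$ is holomorphic on the 1D strip, so Step 1 gives a 1D interpolation inequality over $I_k$ with uniform constants. Integrate this inequality in $x'\in G_k$, using Hölder with exponents $1/\nu$ and $1/(1-\nu)$ to separate the $L^2(\omega)$ factor from the $\mathcal H_a$ factor; this produces a local inequality on each enlarged cube. To recover $\int_{Q_k}|f|^2\,\ud x$ from $\int_{G_k}\!\!\int_{I_k}|f|^2$, one controls the "missing" slices by shrinking them into the $\mathcal H_a$ factor via a trivial $L^2(\R)\leq \|f\|_{\mathcal H_a}$ bound. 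Finally, summing over $k\in\Z^d$ with the discrete $\ell^{1/\nu}$–$\ell^{1/(1-\nu)}$ Hölder inequality turns the sum of products into the product of sums, yielding \eqref{eq:interpolation inequality multiD}.

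\textbf{Main obstacle.} The decisive work is in Step 1: producing a Carleman estimate whose exponent $\nu$ is \emph{uniform} in the position and length of $I$ (scale-invariance is essentially automatic since $a$ is fixed and $|I|\leq R$ stays bounded), and in particular establishing the $E\leftrightarrow I$ passage cleanly without losing the exponent. The remainder of the proof is robust: Step 2 is pure Fubini, and the only subtle point in Step 3 is that the 1D exponent $\nu$ must be the same for all slices $x'$ and all cubes $k$, which is ensured by the uniformity of the density constant $\delta'$.
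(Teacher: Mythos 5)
There is a genuine gap, and it sits exactly where you wave your hands in Step 3. After the Fubini--Chebyshev argument of Step 2 you only control a set $G_k\subset\pi Q_k$ of good slices of measure $\geq \tfrac{\delta'}{2}R^{d-1}$; on the complementary slices $x'\in \pi Q_k\setminus G_k$ the section $\omega_{x'}\cap I_k$ may well be empty, so no one-dimensional inequality is available there, and you have no information whatsoever that $f$ is small on those slices. Bounding the missing part ``by the $\mathcal H_a$ factor'' means bounding it by a quantity comparable to $\Vert f\Vert^2_{\mathcal H_a}$ to the \emph{first} power, not to the power $1-\nu$ multiplied by an $\omega$-term to the power $\nu$; after summing over $k$ you end up with an inequality of the shape
\begin{equation*}
\int_{\R^d}|f|^2\,\ud x \;\leq\; C\Bigl(\int_{\omega}|f|^2\,\ud x\Bigr)^{\nu}\Bigl(\int_{U_a}|f|^2\,|\ud z|\Bigr)^{1-\nu} \;+\; \text{(bad-slice contribution)},
\end{equation*}
where the bad-slice contribution can be as large as a fixed fraction of $\int_{\R^d}|f|^2\,\ud x$ itself (take $f$ concentrated near slices that avoid $\omega$ inside $Q_k$); the estimate is then vacuous, and there is no smallness parameter to absorb it. Fixed-coordinate slicing plus integration over good slices cannot produce an $L^2$ left-hand side over the whole cube.

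The paper avoids this by changing what the left-hand side of the one-dimensional estimate is. In Proposition \ref{prop2} one first uses the Cauchy integral formula (legitimate because $g$ is holomorphic on a complex neighborhood of the ball) to replace the local $L^2$ norm by $\sup_{B}|g| = |g(x_0)|$ at a single point $x_0$, and then slices along \emph{rays through $x_0$} in directions $\omega\in S^{d-1}$. The Chebyshev argument (the analogue of your Step 2, but on the sphere of directions) produces a set $V$ of directions of volume bounded below on which the radial sections of $E$ have measure bounded below, and Lemma \ref{lem3} applied to $g_\omega(z)=g(x_0+\tfrac{z}{r_\omega+a}\omega)$ bounds the \emph{same} quantity $|g(x_0)|$ for every good direction; integrating over $\omega\in V$ then costs nothing, and bad directions are simply never needed because the left-hand side does not depend on the direction. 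This ``sup at a point plus radial slicing'' step is the key idea missing from your assembly. Your Step 1 is essentially the paper's one-dimensional machinery (Lemmas \ref{lem1}--\ref{lem3}) and your final summation over $k$ with the $\ell^{1/\nu}$--$\ell^{1/(1-\nu)}$ H\"older inequality coincides with the paper's last step, but as written the passage from the 1D estimate to the local estimate on a cube does not go through.
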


 Observe that Theorem \ref{thm: Spectral inequality} is an easy consequence of 
Proposition \ref{proposition:interpolation inequality multiD}. In fact, if $f\in L^2(\mathbb R^d)$ is such that its Fourier transform
 $\hat f(\xi)$ is supported in the ball  $\vert \xi\vert \leq \mu$, by the Fourier inversion formula one has
 
 $$ f(x)=(2\pi)^{-d}\int_{\vert \xi\vert \leq \mu}  e^{ix\xi} \hat f(\xi) d\xi, \qquad \forall x \in \R^d .$$
 
 Therefore $f$ is the restriction to $\mathbb R^d$ of the holomorphic function $f(z)$ defined on $\mathbb C^d$
 by  $ f(z)=(2\pi)^{-d}\int_{\vert \xi\vert \leq \mu} e^{iz\xi} \hat f(\xi) d\xi $, and one has by Plancherel theorem
 $$ \Vert f(.+iy)\Vert^2_{L^2(\mathbb R^d)}=(2\pi)^{-d}\int_{\vert \xi\vert \leq \mu} \vert e^{-y\xi}\hat f(\xi)\vert^2 d\xi
 \leq e^{2\mu \vert y\vert}\Vert f\Vert^2_{L^2(\mathbb R^d)}.$$
 Therefore, from \eqref{eq:interpolation inequality multiD} we get
 
 $$ \Vert f\Vert^2_{L^2(\mathbb R^d)} \leq C \left(  \int_{\omega} |f|^2 \ud x \right)^{\nu}
  \left( c_da^d e^{2\mu a}  \Vert f\Vert^2_{L^2(\mathbb R^d)} \right)^{1 - \nu},$$
where $c_d$ is the volume of the unit sphere in $\mathbb R^d$,  and this implies \eqref{eq:spectarlinequality}.\\
 
 We will prove Proposition \ref{proposition:interpolation inequality multiD} in several steps.
First  we prove suitable Carleman estimates for holomorphic functions of one complex variable. In particular, we prove the interpolation estimate given in Lemma \ref{lem2} below. We then deduce the 
multidimensional interpolation inequality given in Proposition \ref{prop2}. Finally, we get the
proof of Proposition \ref{proposition:interpolation inequality multiD} by a simple covering argument.

\subsection{Carleman estimates for $\ovl{\partial}$ in $\mathbb C$}
\label{sec: Carleman estimates}

In this section, we prove basic estimates for holomorphic functions of one complex variable.
We denote by $d\lambda$ the Lebesgue measure on $\mathbb C\simeq \mathbb R^2$. \\

Let $X\subset \mathbb C$ be an open bounded connected domain with regular boundary. 
We start with the following classical Carleman inequality.

\begin{lemma}\label{lem1}
Let $\varphi(x)$ be a continuous function on $X$ such that $\triangle \varphi=\nu$
is a Borel measure on $X$. For all $f\in C_0^\infty(X)$ and all $h>0$, the following inequality holds true

\begin{equation}\label{gl1} 
4h^2\int_{X}  e^{ 2\frac{\varphi(x)}{h} }\vert \ovl \partial f (x)\vert ^2 d\lambda
 \geq 
h  \int_X e^{ 2\frac{\varphi (x)}{h} }\vert  f(x)\vert^2 d\nu.
\end{equation} 
\end{lemma}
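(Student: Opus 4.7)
The strategy is a classical weighted $L^2$ estimate for $\ovl\partial$, obtained by conjugating with the exponential weight $e^{\varphi/h}$. Setting $u := e^{\varphi/h} f$, so that $|u|^2 = e^{2\varphi/h}|f|^2$, a direct computation gives
\[
e^{\varphi/h} \ovl\partial f = L_\varphi u, \qquad L_\varphi u := \ovl\partial u - h^{-1}(\ovl\partial \varphi)\, u.
\]
Hence the left-hand side of \eqref{gl1} equals $4h^2\|L_\varphi u\|^2_{L^2(X)}$, and the lemma reduces to proving the lower bound $\|L_\varphi u\|^2_{L^2(X)} \geq (4h)^{-1}\int_X |u|^2\, d\nu$.

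The plan is to compare $L_\varphi$ with its formal $L^2$-adjoint $L_\varphi^* u = -\partial u - h^{-1}(\partial\varphi)u$ (using $\ovl\partial^{\,*} = -\partial$ and the reality of $\varphi$, whence $\overline{\partial\varphi} = \ovl\partial\varphi$). For $u \in C_c^\infty(X)$, two elementary integrations by parts give $\|\ovl\partial u\|_{L^2}^2 = \|\partial u\|_{L^2}^2$ (both equal $-\tfrac14\int u\,\Delta\bar u\,d\lambda$) and $\|(\ovl\partial\varphi) u\|_{L^2} = \|(\partial\varphi) u\|_{L^2}$. Expanding the two squared norms, these equalities make the symmetric terms cancel, so that
\[
\|L_\varphi u\|^2 - \|L_\varphi^* u\|^2 = -\frac{2}{h}\Re\int_X\bigl[(\ovl\partial\varphi)(\partial u) + (\partial\varphi)(\ovl\partial u)\bigr]\bar u\, d\lambda.
\]
To evaluate the right-hand side I would use the vanishing divergence integrals $\int \ovl\partial[(\partial\varphi)|u|^2]\, d\lambda = 0$ and $\int \partial[(\ovl\partial\varphi)|u|^2]\, d\lambda = 0$ together with $\partial\ovl\partial\varphi = \ovl\partial\partial\varphi = \tfrac14\Delta\varphi = \tfrac14\nu$; summing the two identities and taking real parts collapses the cross terms into
\[
\|L_\varphi u\|^2_{L^2} - \|L_\varphi^* u\|^2_{L^2} = \frac{1}{2h}\int_X |u|^2\, d\nu.
\]
Dropping the nonnegative $\|L_\varphi^* u\|^2$ on the left and multiplying by $4h^2$ yields the stated inequality, in fact with a factor of $2$ to spare on the right.

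The above manipulations assume $\varphi \in C^2(X)$, so that $\partial\ovl\partial\varphi = \tfrac14\Delta\varphi$ holds pointwise. To handle the general case in which $\varphi$ is only continuous and $\nu$ is a Borel measure, I would regularize by convolution: fix a standard smooth mollifier $\rho_\epsilon$, set $\varphi_\epsilon := \varphi * \rho_\epsilon$ (well-defined near $\supp f$ for small $\epsilon$, since $f \in C_0^\infty(X)$), apply the smooth case to $\varphi_\epsilon$ with $\nu_\epsilon := (\rho_\epsilon * \nu)\, d\lambda$, and pass to the limit $\epsilon \to 0^+$. Uniform convergence $\varphi_\epsilon \to \varphi$ on $\supp f$ handles the left-hand side, and weak convergence $\nu_\epsilon \rightharpoonup \nu$ against the continuous compactly supported test function $e^{2\varphi/h}|f|^2$ handles the right-hand side. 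The main obstacle is purely bookkeeping: one must order the integrations by parts so that $\Delta\varphi$ emerges with the correct sign; once that commutator-type identity is in hand, both the $L^2$ bound and the passage to the measure-valued case are routine.
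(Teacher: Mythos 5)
Your proof is correct and follows essentially the same route as the paper: conjugating $\ovl\partial$ by the weight $e^{\varphi/h}$ and producing the commutator term $\tfrac14\triangle\varphi$ by integration by parts — your identity $\Vert L_\varphi u\Vert^2-\Vert L_\varphi^* u\Vert^2=\tfrac1{2h}\int |u|^2\,d\nu$ is the same computation the paper packages as $\Vert P_\varphi g\Vert^2=\Vert Ag\Vert^2+\Vert Bg\Vert^2+i([A,B]g,g)$ with $[A,B]=-ih\triangle\varphi$ — followed by the identical mollification and weak-convergence limit for merely continuous $\varphi$ (where, as the paper notes, one also uses the uniform bound on the total variation of $\nu_\varepsilon$ near $\supp f$). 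The only cosmetic difference is that you discard $\Vert L_\varphi^* u\Vert^2$ instead of $\Vert Ag\Vert^2+\Vert Bg\Vert^2$, which even yields a spare factor $2$.
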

\begin{proof}
Let $f\in C_0^\infty (X)$ be given and let  $Q\subset X$ be a compact set such that the support of $f$ is contained in $Q$.
We first assume that $\varphi$ is smooth in a neighborhood of $Q$.
Let $P : = \frac{2h}{i} \ovl{\partial}$. We define the conjugate operator 
\begin{equation*}
P_{\varphi} := e^{ \frac{\varphi}{h} } P e^{ -\frac{\varphi}{h} }= A+iB, \quad A= \frac{h}{i}\partial_x-\varphi'_y=A^*, \
B= \frac{h}{i}\partial_y+\varphi'_x=B^*.
\end{equation*} 
Set $g=e^{\frac{\varphi}{h}}f\in C_0^\infty(Q)$. One has $P_\varphi g=e^{\frac{\varphi}{h}}Pf$ and  
by integration by part we get

\begin{equation}\label{gl2}
\begin{aligned} 
\Vert P_\varphi g\Vert^2_{L^2(X)} &=  \Vert Ag\Vert^2_{L^2(X)}+ \Vert B g\Vert^2_{L^2(X)}
+ i(Bg,Ag)_{L^2(X)} -i (Ag,Bg)_{L^2(X)}\\ 
&=\Vert Ag\Vert^2_{L^2(X)}+ \Vert B g\Vert^2_{L^2(X)}+i([A,B]g,g)_{L^2(X)}
\end{aligned}
\end{equation}
Since $[A,B]=AB-BA=-ih\triangle\varphi$, we get
\begin{equation}\label{gl3}
4h^2\int_{X}  e^{ 2\frac{\varphi(x)}{h} }\vert \ovl \partial f (x)\vert ^2 d\lambda=\Vert P_\varphi g\Vert^2_{L^2(X)} \geq
  h\int_X e^{ 2\frac{\varphi (x)}{h} }\vert  f(x)\vert^2 \triangle\varphi(x)d\lambda,
\end{equation}
and thus \eqref{gl1} holds true. Let now $\varphi(x)$ be a continuous function on $X$ such that $\triangle \varphi=\nu$
is a Borel measure on $X$. Let $\chi_\varepsilon$ be a smooth approximation of the identity.
Then $\varphi_\varepsilon= \varphi*\chi_\varepsilon$ and $\nu_\varepsilon=\nu*\chi_\varepsilon$
are well defined in a neighborhood $V$ of $Q$ for $\varepsilon$ small and one has 
$\triangle \varphi_\varepsilon d\lambda=\nu_\varepsilon$ on $V$.  The inequality \eqref{gl3} 
holds true for  $\varphi_\varepsilon $. 
Since $\varphi_\varepsilon $
converge uniformly to $\varphi$ on $Q$, it remains to verify

$$ \lim_{\varepsilon\rightarrow 0} \int_X e^{ 2\frac{\varphi_\varepsilon (x)}{h} }\vert  f(x)\vert^2
 \triangle\varphi_\varepsilon(x)d\lambda=
\int_X e^{ 2\frac{\varphi (x)}{h} }\vert  f(x)\vert^2 d\nu\ .
$$
Since the total variation on $Q$ of the  measures $\nu_\varepsilon$ is bounded,  i.e $\sup_\varepsilon\int_Q d\vert \nu_\varepsilon\vert<\infty$, one has

$$ \lim_{\varepsilon\rightarrow 0} 
\int_X \vert e^{ 2\frac{\varphi_\varepsilon (x)}{h} }-e^{ 2\frac{\varphi (x)}{h} }\vert
\vert  f(x)\vert^2
d\nu_\varepsilon = 0 \ .
$$
Then, the result follows from the convergence of the measures $\nu_\varepsilon$ to $\nu$, i.e \\
 $\lim_{\varepsilon\rightarrow 0}\int_X g(x)d\nu_\varepsilon =\int_X gd\nu $ for any continuous
function $g$ with support in $Q$.
The proof of Lemma \ref{lem1} is complete.

\end{proof}

\bigskip

Let $K\subset X$ a compact subset of $X$, 
and $\mu$ a positive Borel measure with support in $K$ such that  $\mu(K)=1$.
 We will assume that $\mu$ satisfies the following hypothesis:\\

\begin{equation}\label{gl4}
\begin{aligned}
&\text{The potential function} \ W(x)=-\frac{1}{2\pi}\int \log\vert x-y\vert d\mu(y), \ \text{solution in} \ \mathcal D'(\mathbb R^2)
\\
& \text{of the equation} \ -\triangle W= \mu, \  \ \text{is continuous on} \ \  \mathbb C\simeq \mathbb R^2.
\end{aligned}
\end{equation}

\bigskip
\noindent
For $y\in X$, we denote by $G(x,y)$  the
Green function of the Dirichlet problem

\begin{equation}\label{gl5}
\left\{  \begin{array}{ll}
-\Delta_{x} G(x,y) = \delta_{x=y} \quad \text{in } \quad  X, \\
G(.,y)\vert_{\partial X}= 0. 
\end{array} \right.
\end{equation} 
Recall that one has $G(x,y)>0$ for all $(x,y)\in X\times X$ with $x\not= y$, and that there exist  constants 
$0<c_1<c_2$
such that for all $x\in X$ with dist$(x,\partial X)$ small one has

\begin{equation}\label{gl6}
 c_1dist(x,\partial X)\leq  \inf_{y\in K}G(x,y), \quad  \sup_{y\in K}G(x,y) \leq c_2dist(x,\partial X) 
\end{equation}
Moreover, $G(x,y)$ is analytic in $x\in X\setminus \{y\}$, and more precisely one has 
\begin{equation}\label{gl7}
\left\{  \begin{array}{ll}
G(x,y)=-\frac{1}{2\pi}\int \log\vert x-y\vert+H(x,y)\\
-\Delta_x H(x,y) = 0, & \textrm{in } \ \ X, \\
H(x,y)\vert_{x\in \partial X}= \frac{1}{2\pi}\int \log\vert x-y\vert. 
\end{array} \right.
\end{equation} 
Let $\Phi_\mu (x)=\int G(x,y)d\mu(y)$. Then $\Phi_\mu$ satisfies

\begin{equation}\label{gl8}
\left\{  \begin{array}{ll}
-\Delta \Phi_\mu  = \mu, & \textrm{in } \ \ X, \\
 \Phi_\mu \vert_{\partial X}= 0. 
\end{array} \right.
\end{equation} 
By assumption \eqref{gl4}, the function  $\Phi_\mu(x)$ is continuous on $X$. Moreover, $\Phi_\mu$
is smooth on $\overline X\setminus K$, one has $\Phi_\mu(x)>0$ for all $x\in X$, and  

\begin{equation}\label{gl9}
\left\{  \begin{array}{ll}
\Phi_\mu(x)=W(x)+h(x)\\
-\Delta h = 0, \ \text{in } \  X,  \quad h\vert_{\partial X}=-W\vert_{\partial X}.
\end{array} \right.
\end{equation} 
We will denote by $C_\mu>0$ the constant
\begin{equation}\label{gl10}
C_\mu=\sup_{x\in X}\Phi_\mu(x)=\sup_{x\in K}\Phi_\mu(x)\ .
\end{equation} 
Observe that from \eqref{gl6}, one has for all $x\in X$ with dist$(x,\partial X)$ small  
\begin{equation}\label{gl6bis}
\Phi_\mu(x)\leq c_2 \text{dist}(x,\partial X).
\end{equation}

Let $Y\subset X$ an open subset of $X$ with regular boundary such that $K\subset Y$ and $\overline Y \subset X$.
We will denote by $c_Y>0$ the constant
\begin{equation}\label{gl11}
c_Y=\inf_{x\in Y, y\in K}G(x,y)\ .
\end{equation} 
From $\Phi_\mu (x)=\int G(x,y)d\mu(y)$ we get
\begin{equation}\label{gl12}
c_Y \leq \inf_{x\in Y}\Phi_\mu(x)\ .
\end{equation} 
Let $\Psi_Y(x)= -\int_Y G(x,y)d\lambda(y)$ be the solution of
\begin{equation}\label{gl13}
\triangle \Psi_Y=1_{x\in Y}, \ \   \Psi_Y\vert_{\partial X}=0 \ .
\end{equation} 
The function $ \Psi_Y$ is continuous on $\overline X$ and one has $\Psi_Y(x)\leq 0$ for all $x\in X$.
We denote by $C_Y$ the constant
\begin{equation}\label{gl14}
C_Y=\sup_{x\in Y}\vert  \Psi_Y(x)\vert  \ .
\end{equation} 
Let $\rho>0$ such that $2\rho C_Y\leq c_Y$ and define $\varphi$ by the formula

\begin{equation}\label{gl15}
\varphi(x)=\Phi_\mu(x)+\rho \Psi_Y(x) \ .
\end{equation} 
Then $\varphi$ is continuous on $X$ and one has $\triangle \varphi=-\mu+\rho1_{x\in Y} $. Thus we can apply 
\eqref{gl1} and we get for all $f\in C_0^\infty(X)$

\begin{equation}\label{gl16} 
4h^2\int_{X}  e^{ 2\frac{\varphi(x)}{h} }\vert \ovl \partial f (x)\vert ^2 d\lambda
+h \int_K e^{ 2\frac{\varphi (x)}{h} }\vert  f(x)\vert^2 d\mu 
\geq h\rho  \int_Y e^{ 2\frac{\varphi (x)}{h} }\vert  f(x)\vert^2 d\lambda.
\end{equation} 
Let $g$ be an holomorphic function in X such that $g\in L^2(X)$. We will apply \eqref{gl16} to $\psi g$, with $\psi\in C_0^\infty(X)$
such that $\psi$ is equal to $1$ in a neighborhood of $\overline Y$, and $\nabla\psi$ is supported in
$\text{dist}(x,\partial X)\leq r$ with $r>0$ small enough to have $4c_2 r\leq c_Y$. By our choice of the constants
$\rho$ and $r$,   the following inequalities hold true:

\begin{equation}\label{gl17}
\begin{aligned}
\sup_{x\in K}\varphi (x)&\leq \sup_{x\in K}\Phi_\mu(x)=C_\mu ,\\
\inf_{x\in Y}\varphi(x) &\geq \inf_{x\in Y}\Phi_\mu(x)- \rho \sup_{x\in Y}\vert  \Psi_Y(x)\vert \geq c_Y/2 ,\\
\sup_{x\in \text{support}(\nabla\psi)}\varphi (x)&\leq \sup_{dist(x,\partial X)<r}\Phi_\mu(x)\leq c_Y/4 \ .
\end{aligned} 
\end{equation} 
Since $\ovl\partial(\psi g)=g\ovl\partial\psi$, 
we get that the following Lemma holds true, with $M=\Vert \ovl\partial\psi\Vert_{L^\infty}$.
\begin{lemma}\label{lem2}
 For every holomorphic function $g\in L^2(X)$ and all $h>0$, the following inequality holds true
\begin{equation}\label{gl18} 
4h^2 M e^{ \frac {c_Y}{2h}}\int_{X}  \vert g (x)\vert ^2 d\lambda 
+h  e^{ \frac {2C_\mu}{h} }\int_K \vert  g(x)\vert^2 d\mu 
\geq h\rho e^{ \frac {c_Y}{h} } \int_Y \vert  g(x)\vert^2 d\lambda.
\end{equation} 
\end{lemma}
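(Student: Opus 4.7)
The plan is to obtain Lemma \ref{lem2} as a direct application of the Carleman inequality \eqref{gl16} from Lemma \ref{lem1}, evaluated on the specific test function $f := \psi g$, where $\psi \in C_0^\infty(X)$ is the cutoff already fixed in the discussion preceding the lemma (equal to $1$ in a \nhd of $\ovl Y$, with $\nabla\psi$ supported in $\{\dist(x,\partial X)\leq r\}$ for $r>0$ satisfying $4c_2 r\leq c_Y$). Since $g$ is holomorphic in $X$ it is smooth there, and $\psi$ has compact support in $X$, so $\psi g\in C_0^\infty(X)$ and \eqref{gl16} applies. Crucially, holomorphy gives $\ovl\partial(\psi g)=g\,\ovl\partial\psi$, so the "error" term on the left is supported in $\supp(\nabla\psi)$, a thin shell near $\partial X$ that has been arranged to lie in the region where $\varphi$ is small.

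The key bookkeeping is simply to insert the three pointwise inequalities collected in \eqref{gl17} into the three integrals appearing in \eqref{gl16}. Concretely, I would estimate
\begin{align*}
4h^2\int_X e^{2\varphi/h}|\ovl\partial(\psi g)|^2\, d\lambda
&\leq 4h^2\,\|\ovl\partial\psi\|_{L^\infty}^2\, e^{c_Y/(2h)} \int_X |g|^2\, d\lambda, \\
h\int_K e^{2\varphi/h}|\psi g|^2\, d\mu
&\leq h\, e^{2C_\mu/h}\int_K |g|^2\, d\mu, \\
h\rho\int_Y e^{2\varphi/h}|\psi g|^2\, d\lambda
&\geq h\rho\, e^{c_Y/h}\int_Y |g|^2\, d\lambda,
\end{align*}
where in the last line I use that $\psi\equiv 1$ on $Y$. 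Substituting these into \eqref{gl16} and writing $M$ for the relevant $L^\infty$ norm of $\ovl\partial\psi$ yields exactly \eqref{gl18}.

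The only subtlety worth noting is that $g$ is only assumed to be in $L^2(X)$ and holomorphic, not smooth up to the boundary; but since $\psi$ is compactly supported inside $X$, the product $\psi g$ is genuinely in $C_0^\infty(X)$, so no approximation argument is needed. There is no real obstacle here: once the weight $\varphi=\Phi_\mu+\rho\Psi_Y$ has been constructed so that \eqref{gl17} holds, the lemma is a purely mechanical consequence of applying \eqref{gl16} to $\psi g$ and reading off the three exponential weights from the three regions ($K$, $Y$, and $\supp\nabla\psi$). The harder work was already done in fixing $\rho$ so that $\inf_Y\varphi\geq c_Y/2$ while still ensuring $\sup_{\supp\nabla\psi}\varphi\leq c_Y/4$, which produces the exponential separation $e^{c_Y/h}$ versus $e^{c_Y/(2h)}$ that makes the conclusion useful.
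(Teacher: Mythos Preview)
Your proof is correct and follows exactly the paper's approach: apply \eqref{gl16} to $f=\psi g$, use $\ovl\partial(\psi g)=g\,\ovl\partial\psi$, and plug in the three bounds from \eqref{gl17} on $\supp(\nabla\psi)$, $K$, and $Y$ respectively. The only cosmetic discrepancy is that your computation produces $\|\ovl\partial\psi\|_{L^\infty}^2$ while the lemma and the paper write $M=\|\ovl\partial\psi\|_{L^\infty}$; this is a harmless notational slip already present in the paper and does not affect the argument.
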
 
\begin{remark}
Observe that from the definition \eqref{gl10} of $C_\mu$, one has obviously
$C_\mu\geq c_Y$.
\end{remark}
\begin{proposition}\label{prop1}
There exists a constant $C$, depending only on $X,Y$ such that for all holomorphic function $g\in L^2(X)$, the following interpolation inequality holds true
\begin{equation}\label{gl19}
 \int_Y \vert  g(x)\vert^2 d\lambda\leq C 
 \big( \int_K \vert  g(x)\vert^2 d\mu\big)^\delta \big( \int_{X}  \vert g (x)\vert ^2 d\lambda \big)^{1-\delta}, \quad
 \delta=\frac{c_Y}{4C_\mu-c_Y}\ .
\end{equation}
\end{proposition}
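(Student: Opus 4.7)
The plan is to derive \eqref{gl19} from Lemma \ref{lem2} by optimizing the free parameter $h>0$. Write
$$A := \int_X |g|^2\,d\lambda,\qquad B := \int_K |g|^2\,d\mu,\qquad I := \int_Y |g|^2\,d\lambda,$$
so that $I \leq A$ since $Y \subset X$. Dividing \eqref{gl18} by $h\rho\, e^{c_Y/h}$ produces, for every $h>0$,
\begin{equation*}
I \;\leq\; K_1\, h\, e^{-c_Y/(2h)}\, A \;+\; K_2\, e^{(2C_\mu - c_Y)/h}\, B,
\end{equation*}
with constants $K_1, K_2$ depending only on $X, Y$. The first coefficient vanishes as $h\to 0^+$ while the second blows up, and they swap roles as $h\to\infty$; the exponent $\delta$ will emerge from balancing them.

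By homogeneity I may normalize $A = 1$. When $B \geq e^{-1}$, the trivial bound gives $I \leq 1 \leq e^{\delta} B^{\delta}$ and the claim is immediate. When $B < e^{-1}$, the idea is to pick $h = h(B)$ so that each of the two summands on the right reduces to $B^{\delta}$ up to an unimportant prefactor. A brief computation shows that the two conditions
$$\frac{c_Y}{2h} \;=\; \delta\,\log\!\frac{1}{B}, \qquad \frac{2C_\mu - c_Y}{h} \;=\; (1-\delta)\,\log\!\frac{1}{B}$$
are compatible precisely for $\delta = c_Y/(4C_\mu - c_Y)$, and are simultaneously realized by the single choice
$$h := \frac{4C_\mu - c_Y}{2\log(1/B)}.$$
Substituting this $h$ gives $e^{-c_Y/(2h)} = B^{\delta}$ and $e^{(2C_\mu - c_Y)/h}\, B = B^{\delta}$, whence
$$I \;\leq\; (K_1 h + K_2)\, B^{\delta}.$$
Since $\log(1/B) \geq 1$ in this regime, $h$ is bounded by a constant depending only on $X, Y$, and I obtain $I \leq C\, B^{\delta}$ with $C = C(X, Y)$. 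Undoing the normalization restores the general form \eqref{gl19}.

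The main point is thus to identify the precise balance $h = h(B)$ producing the exponent $\delta = c_Y/(4C_\mu - c_Y)$ from the two competing exponential scales $c_Y/(2h)$ and $(2C_\mu - c_Y)/h$ present in Lemma \ref{lem2}. The only subtlety is the splitting off of the regime where $\log(1/B)$ is small (so that the optimizer $h$ would blow up), which is harmlessly absorbed by the a priori inclusion $Y\subset X$. No further estimate beyond Lemma \ref{lem2} is needed.
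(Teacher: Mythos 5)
Your reduction of Lemma \ref{lem2} and the balancing choice $h=\frac{4C_\mu-c_Y}{2\log(1/B)}$ reproduce in essence the paper's choice of the critical parameter $h_*$, and the exponent $\delta=\frac{c_Y}{4C_\mu-c_Y}$ comes out correctly. The gap is in the last step: the claim that, because $\log(1/B)\geq 1$, ``$h$ is bounded by a constant depending only on $X,Y$'' is false. All that regime gives is $h\leq\frac{4C_\mu-c_Y}{2}$, so your final constant is of size $K_1\frac{4C_\mu-c_Y}{2}+K_2$, which depends on $C_\mu$, hence on the measure $\mu$. And $C_\mu$ is not controlled by $X,Y$: it is unbounded over the admissible class of measures (in the application, Lemma \ref{lem3}, one has $C_\mu\leq C(1+\vert\log\vert E\vert\vert)$ with $\vert E\vert$ arbitrarily small, and the conclusion there needs a constant independent of $E$). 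The whole point of Proposition \ref{prop1} is precisely that $C$ is independent of $\mu$ while only $\delta$ degenerates; as written, your argument proves the inequality only with a $\mu$-dependent constant, a strictly weaker statement.

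The fix is to cut not at $B=e^{-1}$ but at the threshold where your balancing parameter reaches $h=1$, i.e.\ at $B=e^{-(4C_\mu-c_Y)/2}$. If $B\geq e^{-(4C_\mu-c_Y)/2}$, then $B^\delta\geq e^{-\delta(4C_\mu-c_Y)/2}=e^{-c_Y/2}$ (the $\delta$-th power kills the $C_\mu$-dependence, since $\delta(4C_\mu-c_Y)=c_Y$), so the trivial bound $I\leq A$ already gives $I\leq e^{c_Y/2}B^\delta A^{1-\delta}$ with an $X,Y$-only constant; if $B< e^{-(4C_\mu-c_Y)/2}$, then your $h$ satisfies $h<1$ and $K_1h+K_2\leq K_1+K_2$ is admissible. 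This is exactly the case distinction $h_*\geq1$ versus $h_*\leq1$ in the paper's proof, where the authors point out that $F(1)^\delta$ is independent of $C_\mu$. A minor additional point: your formula for $h$ is undefined when $B=0$; that case should be handled separately, e.g.\ by letting $h\to0^+$ in the inequality, as the paper does.
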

\begin{proof}
Set $\mathcal X=\int_{X}  \vert g (x)\vert ^2 d\lambda $, $\mathcal O=\int_K \vert  g(x)\vert^2 d\mu$ and
$\mathcal Y=  \int_Y \vert  g(x)\vert^2 d\lambda$. Inequality \eqref{gl18} reads
\begin{equation}\label{gl20}
 4hMe^{ \frac {-c_Y}{2h}} \mathcal X +e^{ \frac {2C_\mu-c_Y}{h} }\mathcal O\geq \rho \mathcal Y \ .
 \end{equation}
If $\mathcal O=0$, by taking the limit $h\rightarrow 0$, we get $\mathcal Y=0$, hence we  may assume $\mathcal O>0$. Let $h_*$ be such that 
$$ \frac{\mathcal X}{\mathcal O}=F(h_*), \quad F(h)=\frac{1}{4hM}e^{ \frac {4C_ \mu-c_Y}{2h} } \ .$$
If $h_*\geq 1$, we use $F(h_*)\leq F(1)$, hence $\mathcal X\leq F(1)\mathcal O$, and we write 
$$\mathcal Y\leq \mathcal X\leq F(1)^\delta \mathcal O^\delta\mathcal X^{1-\delta} .$$
Observe that $F(1)^\delta$ is independent of $C_\mu$, hence depends only on $X,Y$. \\
If $h_*\leq 1$, we use $e^{\frac{4C_\mu-c_Y}{2h_*}}\leq 4M \frac{\mathcal X}{\mathcal O}$, and we write
$$4h_*Me^{ \frac {-c_Y}{2h_*}} \mathcal X +e^{ \frac {2C_\mu-c_Y}{h_*} }\mathcal O =
2e^{ \frac {2C_\mu-c_Y}{h_*} }\mathcal O \leq (4M)^{1-\delta}\mathcal O^\delta\mathcal X^{1-\delta} .$$
The proof of Proposition \ref{prop1} is complete. 
\end{proof}

\noindent
From Proposition \ref{prop1} we deduce the following Lemma.
\begin{lemma}\label{lem3}
 Let $X\subset \mathbb C$
be a complex neighborhood of $[0,1]$. There exists  constants $C,c$ depending only on $X$
such that  the following holds true. If $E\subset [0,1]$ is a measurable set with positive measure $\vert E\vert>0$,
and $g$ a holomorphic and bounded function on $X$, one has:
\begin{equation}\label{gl21}
 \sup_{x\in [0,1]} \vert  g(x)\vert \leq \frac{C}{ \vert E\vert^{\delta/2}}
 \big( \int_E \vert  g(x)\vert^2 dx\big)^{\delta/2} \big(\sup_{x\in X} \vert g (x)\vert\big)^{1-\delta}, \quad
 \delta=\frac{c}{1+\vert\log\vert E\vert \vert}\ .
\end{equation}
\end{lemma}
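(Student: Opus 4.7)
I would deduce Lemma \ref{lem3} from Proposition \ref{prop1} by taking $\mu$ to be (essentially) normalised Lebesgue measure on a compact subset of $E$, and then promoting the resulting $L^2$-bound on a neighbourhood of $[0,1]$ to the desired sup-bound via the mean-value property of holomorphic functions. Fix, once and for all depending only on $X$, bounded open sets with smooth boundary $Y\Subset X'\Subset X$ such that $[0,1]\subset Y$. Given $E\subset[0,1]$ measurable with $|E|>0$, inner regularity of Lebesgue measure produces a compact $K\subset E$ with $|K|\geq|E|/2$; set $d\mu=|K|^{-1}\mathbf{1}_K\,dx$, a probability measure supported in the compact set $K\subset[0,1]\subset Y$. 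Since $\log|\cdot|$ is locally integrable on $\R^2$ and $K$ is bounded, the logarithmic potential $W(x)=-\tfrac{1}{2\pi|K|}\int_K\log|x-y|\,dy$ is continuous on $\comp$, so hypothesis \eqref{gl4} is satisfied.

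\textbf{Estimating $c_Y$ and $C_\mu$.} Because $K\subset[0,1]\subset Y$ and $\overline Y\subset X'$, the Dirichlet Green function $G_{X'}$ of $X'$ is positive and continuous on $\overline Y\times[0,1]$ off the diagonal (where it blows up to $+\infty$), so
$$c_Y=\inf_{x\in Y,\,y\in K}G_{X'}(x,y)\;\geq\;\inf_{x\in \overline Y,\,y\in[0,1]}G_{X'}(x,y)\;=:\;c_0>0,$$
a constant depending only on $X$. For $C_\mu$, using $|x-y|\geq |\Re(x)-y|$ for $y\in\R$ and one-dimensional rearrangement, for every $x\in\comp$,
$$\int_K \log\tfrac{1}{|x-y|}\,dy\;\leq\; \int_{-|K|/2}^{|K|/2}\log\tfrac{1}{|t|}\,dt \;=\; |K|\bigl(1+\log\tfrac{2}{|K|}\bigr),$$
so $W(x)\leq C_1(1+|\log|E||)$ uniformly on $\comp$. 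The harmonic part $h$ in \eqref{gl9} equals $-W$ on $\partial X'$, and since $\partial X'\cap[0,1]=\emptyset$ the boundary values of $W$ are uniformly bounded, so $|h|\leq C_2$ on $X'$ by the maximum principle. Hence $C_\mu\leq C_3(1+|\log|E||)$, which yields
$$\delta=\frac{c_Y}{4C_\mu-c_Y}\;\geq\;\frac{c}{1+|\log|E||}.$$

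\textbf{Conclusion.} Since $g$ is bounded on $X\supset \overline{X'}$ and $X'$ is bounded, $g\in L^2(X')$, and Proposition \ref{prop1} applied on $X'$ gives
$$\int_Y|g|^2\,d\lambda\;\leq\; C\Bigl(\tfrac{1}{|K|}\int_K|g|^2\,dx\Bigr)^{\!\delta}\Bigl(\int_{X'}|g|^2\,d\lambda\Bigr)^{\!1-\delta}.$$
Using $|K|\geq|E|/2$ and $K\subset E$, the first factor is at most $\tfrac{2}{|E|}\int_E|g(x)|^2\,dx$; the second is at most $|X'|(\sup_X|g|)^2$. Finally, pick $r>0$ with $D(x,r)\subset Y$ for every $x\in[0,1]$; subharmonicity of $|g|^2$ then gives $\sup_{x\in[0,1]}|g(x)|^2\leq \tfrac{1}{\pi r^2}\int_Y|g|^2\,d\lambda$. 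Combining these estimates and taking a square root produces \eqref{gl21}.

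\textbf{Main obstacle.} The delicate point is the logarithmic bound $C_\mu\leq C(1+|\log|E||)$: once this is in hand, the rest is a routine combination of Proposition \ref{prop1} with the mean-value inequality. It is precisely the \emph{logarithmic} (rather than polynomial) dependence on $|E|$ that produces the Kovrijkine-type scaling $\delta\sim 1/(1+|\log|E||)$ demanded by \eqref{gl21}, and this in turn hinges on the sharp $|K|\log(1/|K|)$ estimate of the Newtonian potential of a one-dimensional set viewed in $\R^2$.
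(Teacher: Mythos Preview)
Your proof is correct and follows essentially the same route as the paper: apply Proposition~\ref{prop1} with $\mu$ the normalised Lebesgue measure on (a subset of) $E$, bound $C_\mu\leq C(1+|\log|E||)$ via the logarithmic potential, and upgrade the resulting $L^2(Y)$ estimate to a sup-bound on $[0,1]$. The only cosmetic differences are that the paper takes $K=[0,1]$ directly (so $c_Y$ is a fixed constant and the inner-regularity step is unnecessary) and invokes the Cauchy integral formula where you use the sub-mean-value inequality for $|g|^2$; your rearrangement argument for the potential bound is in fact more explicit than the paper's one-line estimate.
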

\begin{proof}
We may assume $X$ bounded with regular boundary. 
We apply Proposition \ref{prop1} with $K=[0,1]$ and the measure $\mu$ defined by 
$\int gd\mu= \vert E\vert^{-1}\int_E g(x)dx$. By Cauchy integral formula, if $Y\subset\subset X$ is a complex neighborhood of $[0,1]$, there exists a constant $C$ such that 
$$   \sup_{x\in [0,1]} \vert  g(x)\vert  \leq  C ( \int_Y \vert  g(x)\vert^2 d\lambda)^{1/2}.$$
Thus, from \eqref{gl19} it just remains to verify the lower bound  on $\delta$. By formula \eqref{gl19},
this is equivalent to get a  upper bound on $C_\mu$. From formula \eqref{gl9} we get
$$C_\mu \leq C+ \sup_{x\in [0,1]} \frac{1}{\vert E\vert}\int_E \vert \log \vert x-t\vert\vert dt \leq C(1+\vert\log\vert E\vert \vert),$$  
with $C$ independent of $E$. The proof of Lemma \ref{lem2} is complete.
\end{proof}

\subsection{Interpolation estimates in $\mathbb R^d$}
Let $R>0$ be given.
Let $X\subset \mathbb C^d$ be a bounded complex neighborhood of  the closed Euclidean ball
$B_{R}=\{x\in \mathbb R^d, \ \vert x \vert \leq R\}$. Let $E\subset B_R$ be a measurable set with positive 
Lebesgue measure, $\vert E\vert>0$. The goal of this section is to prove
the following interpolation inequality.

\begin{proposition}\label{prop2}
There exists  constants $C>0,\delta\in ]0,1]$,  depending only on $X$, $R$ and $\vert E \vert$, 
such that for all holomorphic function $g\in L^2(X)$, the following interpolation inequality holds true
\begin{equation}\label{gl22}
 \int_{B_R} \vert  g(x)\vert^2 dx \leq C 
 \big( \int_E \vert  g(x)\vert^2 dx\big)^\delta \big( \int_{X}  \vert g (z)\vert ^2 \vert dz \vert  \big)^{1-\delta}\ .
\end{equation}
\end{proposition}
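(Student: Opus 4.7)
The strategy is induction on the dimension $d$, using Lemma~\ref{lem3} for the one-dimensional case and reducing each higher-dimensional case by Fubini slicing combined with a pigeonhole argument. Since the inequality is monotone under shrinking $B_R$ or enlarging $X$, it suffices to prove the analogue of \eqref{gl22} with $B_R$ replaced by the cube $Q_R := [-R, R]^d$; the ball version then follows by choosing a cube containing $B_R$ and a slightly smaller complex neighborhood contained in $X$. The base case $d = 1$ follows from Lemma~\ref{lem3} (rescaled to $[-R, R]$): picking $X' \Subset X$, Lemma~\ref{lem3} bounds $\sup_{[-R, R]} |g|$ in terms of $|E|^{-\delta/2}\bigl(\int_E |g|^2\,dx\bigr)^{\delta/2}$ and $\bigl(\sup_{X'} |g|\bigr)^{1 - \delta}$; subharmonicity of $|g|^2$ turns $\sup_{X'} |g|^2$ into $C\int_X |g|^2 \, |dz|$, and integrating $\sup_{[-R, R]} |g|^2$ over $[-R, R]$ gives the claim.

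For the inductive step $d - 1 \to d$, write $x = (x_1, x')$ with $x_1 \in [-R, R]$, $x' \in Q_R^{d-1}$, and let $E_{x_1} := \{x' : (x_1, x') \in E\}$. Using $|E| = \int_{-R}^R |E_{x_1}|\, dx_1$ and $|E_{x_1}| \leq (2R)^{d-1}$, a pigeonhole argument produces a measurable set $F_1 \subset [-R, R]$ with $|F_1| \geq |E|/(2^d R^{d-1})$ and $|E_{x_1}| \geq |E|/(4R)$ for every $x_1 \in F_1$. Pick bounded complex neighborhoods $X_{d-1} \subset \comp^{d-1}$ of $Q_R^{d-1}$ and $X_1 \subset \comp$ of $[-R, R]$ small enough that $[-R, R] \times X_{d-1} \subset X$ and $X_1 \times Q_R^{d-1} \subset X$ (possible by the compactness of $Q_R$). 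Replacing each $E_{x_1}$ by a measurable subset of a common fixed measure, the $(d-1)$-dimensional induction hypothesis applied to $x' \mapsto g(x_1, x')$ yields, uniformly in $x_1 \in F_1$,
\begin{equation*}
\int_{Q_R^{d-1}} |g(x_1, x')|^2\, dx' \leq C_1 \Bigl(\int_{E_{x_1}} |g|^2\, dx'\Bigr)^{\delta_1} \Bigl(\int_{X_{d-1}} |g(x_1, z')|^2\, |dz'|\Bigr)^{1 - \delta_1}.
\end{equation*}
Integration over $F_1$ combined with Hölder's inequality (exponents $1/\delta_1$ and $1/(1 - \delta_1)$) and $F_1 \times X_{d-1} \subset X$ gives
\begin{equation*}
\int_{F_1 \times Q_R^{d-1}} |g|^2\, dx \leq C_1 \Bigl(\int_E |g|^2\, dx\Bigr)^{\delta_1} \Bigl(\int_X |g|^2\, |dz|\Bigr)^{1 - \delta_1}.
\end{equation*}
Then for each $x' \in Q_R^{d-1}$, applying the 1D base case to $z_1 \mapsto g(z_1, x')$ with observation set $F_1$ (whose measure is now fixed) and integrating in $x'$ by Hölder yields
\begin{equation*}
\int_{Q_R} |g|^2\, dx \leq C_2 \Bigl(\int_{F_1 \times Q_R^{d-1}} |g|^2\, dx\Bigr)^{\delta_2} \Bigl(\int_X |g|^2\, |dz|\Bigr)^{1 - \delta_2}.
\end{equation*}
Composing the two estimates gives \eqref{gl22} with exponent $\delta := \delta_1 \delta_2 \in (0, 1)$.

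The main obstacle is ensuring uniformity of the constants $C_1, \delta_1$ as $x_1$ ranges over $F_1$: the induction hypothesis a priori produces constants depending on the exact measure $|E_{x_1}|$, which varies with $x_1$. This is resolved by shrinking each $E_{x_1}$ to a measurable subset of a common prescribed measure (continuity of Lebesgue measure permits this), applying the induction hypothesis with now-uniform constants, and then reinstating $E_{x_1}$ on the right-hand side by monotonicity of the $L^2$ integral in the observation set. Once this uniformity is secured, the iteration is essentially two applications of interpolation bracketed by Hölder's inequality in orthogonal directions.
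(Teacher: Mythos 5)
Your route (induction on dimension via Fubini slicing and a pigeonhole on good slices) is genuinely different from the paper's, which fixes a point $x_0\in B$ where $\sup_B\vert g\vert$ is attained and applies the one–dimensional Lemma~\ref{lem3} along the rays $r\mapsto x_0+r\omega$, averaging over the directions $\omega$ for which the radial trace $E_\omega$ has definite length. However, your opening reduction does not work: $X$ is only assumed to be a (possibly thin) bounded complex neighborhood of the closed ball $\overline{B_R}$, so a cube $Q_R=[-R,R]^d$ containing $B_R$ is in general \emph{not} contained in $X$ (for $d\ge 2$ its corners lie at distance $R\sqrt d>R$ from the origin), and $g$ is simply not defined on $Q_R\setminus X$; hence there is no ``cube version'' available, and no ``slightly smaller complex neighborhood contained in $X$'' of any cube containing $B_R$. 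Repairing this is not mere bookkeeping: you would have to either run the slicing intrinsically on the ball, where the $(d-1)$-dimensional slices degenerate near $\partial B_R$ and the uniformity of the inductive constants must be re-examined, or cover $B_R$ by small cubes whose complex neighborhoods lie in $X$, pigeonhole one cube carrying a fraction of $\vert E\vert$, and then propagate the estimate to the other cubes by a chain of further interpolation inequalities (propagation of smallness). This is precisely the difficulty the paper's radial device avoids: segments joining the interior point $x_0$ to points of $E$ stay inside $B_R$, hence inside a fixed complex thickening contained in $X$, so only the one-dimensional lemma on a fixed complex neighborhood of $[0,1]$ is ever invoked.

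There is a second, more local, flaw: in both H\"older steps you bound $\int_{F_1}\int_{X_{d-1}}\vert g(x_1,z')\vert^2\,\vert dz'\vert\,dx_1$ (and later $\int_{Q_R^{d-1}}\int_{X_1}\vert g(z_1,x')\vert^2\,\vert dz_1\vert\,dx'$) by $\int_X\vert g\vert^2\,\vert dz\vert$ ``since $F_1\times X_{d-1}\subset X$''. These are integrals over real-codimension-one (respectively codimension $d-1$) slices of $X$, which have Lebesgue measure zero in $\comp^d$, so set inclusion gives nothing. The inequalities are true, but only after invoking holomorphy again, e.g.\ the sub-mean-value property of the plurisubharmonic function $\vert g\vert^2$, or a Cauchy estimate $\sup_{\tilde X}\vert g\vert^2\le C\int_X\vert g\vert^2\,\vert dz\vert$ for $\tilde X\subset\subset X$ with the product neighborhoods chosen compactly inside $X$ (this is the same reduction the paper performs before applying Lemma~\ref{lem3}). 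Your uniformization trick, shrinking each $E_{x_1}$ to a common prescribed measure and reinstating it by monotonicity, is fine, as is the pigeonhole producing $F_1$; but as written the proof has the two gaps above, the first being the substantive one.
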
 
\begin{proof}
The proof of Proposition \ref{prop2} is an usual consequence of Lemma \ref{lem3}.
We recall it for the reader's convenience.  We may assume $R=1$. If $\tilde X \subset \subset X$
is a complex neighborhood of $B=B_1$, by Cauchy integral formula one has
$$ \sup_{x\in \tilde X} \vert g(x)\vert \leq C ( \int_{X}  \vert g (z)\vert ^2 \vert dz \vert )^{1/2}\ .$$
Therefore, replacing $X$ by $\tilde X$, we just have to prove
\begin{equation}\label{gl23}
\sup_{x\in B} \vert g(x)\vert   \leq C 
 \big( \int_E \vert  g(x)\vert^2 dx \big)^{\delta/2}  \big(  \sup_{x\in X} \vert g(x)\vert \big)^{1-\delta}\ .
\end{equation}
Let $x_0\in B$ such that $\vert g(x_0)\vert=\sup_{x\in B} \vert g(x)\vert$. 
Let $\rho\in ]0,1[$ such that $\vert B_\rho\vert\leq \vert E\vert/2$. Set $\tilde E=E\cap \{\rho<\vert x\vert <1\}$.
One has $\vert\tilde E\vert \geq \vert E\vert/2$. Let $S^{d-1}$ be the unit sphere in $\mathbb R^d$ and
let $c_d$ be its volume. For $\omega\in S^{d-1}$, define $r_\omega\in [0,2]$ as the largest value of $r\geq 0$
such that $x_0+r\omega\in B_1$ and set 
$$ E_\omega= \{r\in ]\rho,r_\omega], \ \text{such that} \ \ x_0+r\omega\in E\} \ .$$
We denote by  $\vert E_\omega \vert \in [0,2]$ 
 the Lebesgue measure of $E_\omega\subset [0,2]$. One has  
 \begin{equation}\label{gl24}
  \vert \tilde E\vert =\int_{S^{d-1}}(\int_{r\in E_\omega}r^{d-1}dr)d\sigma(\omega)\leq 2^{d-1}
 \int_{S^{d-1}}\vert E_\omega\vert d\sigma(\omega).
 \end{equation}
 Set
 $$ V= \{\omega\in S^{d-1} \ \text{such that} \ \vert E_\omega\vert \geq \frac{\vert \tilde E\vert}{2^dc_d} \} \ .$$
 By \eqref{gl24} one has $  \vert \tilde E\vert \leq \vert \tilde E\vert/2 + 2^dVol(V)$, 
 hence $ Vol(V)\geq \vert \tilde E\vert /2^{d+1}$.\\
 Observe that there exists $a>0$ such that $X$ is a complex neighborhood of $B_{1+a}$.
 Therefore,  for each $\omega\in V$,  the function of one complex variable $z$,
 $$g_\omega(z)=g(x_0+\frac{z}{r_\omega+a}\omega)$$
   is defined in a complex neighborhood $Z$ of the interval $[0,1]$ independent of $\omega$  
   and one has $x_0+\frac{z}{r_\omega+a}\omega \in X$ for $z\in Z$. Therefore, we can apply Lemma \ref{lem2}, and we get 
 for all $\omega\in V$,
 $$\vert g(x_0)\vert^{2/\delta}= \vert g_\omega(0)\vert^{2/\delta}
 \leq C \left( \int_{E_\omega}\vert g(x_0+r\omega)\vert ^2 dr \right)  
  \big(  \sup_{x\in X} \vert g(x)\vert \big)^{\frac{2(1-\delta)}{\delta}}\ ,$$
  with $C,\delta$ depending only on $X$ and $\vert E\vert$ since with have the lower bound
$\vert E_\omega\vert \geq \frac{\vert E\vert}{c_d2^{d+1}}$.
By integration in $\omega\in V$, using $E_\omega \subset ]\rho, 2[$, we get
$$ Vol(V) \vert g(x_0)\vert^{2/\delta}\leq 
\frac {C}{\rho^{d-1}}\int_{\omega\in V}  \int_{E_\omega}\vert r^{d-1}g(x_0+r\omega)\vert^2 dr d\sigma(\omega)
  \quad \big(  \sup_{x\in X} \vert g(x)\vert \big)^{\frac{2(1-\delta)}{\delta}}\ .$$
Since 
 $$ \int_{\omega\in V}  \int_{E_\omega}\vert r^{d-1}g(x_0+r\omega)\vert^2 dr d\sigma(\omega)\leq 
 \int_E \vert g\vert^2dx \ ,$$
 we get that \eqref{gl23} holds true.
The proof of Proposition \ref{prop2} is complete. 
\end{proof}

\subsection{Proof of Proposition \ref{proposition:interpolation inequality multiD}}
In this section we  prove Proposition \ref{proposition:interpolation inequality multiD}.
Let $R, \delta>0$ given by the assumption \eqref{eq:geometricconditiontechnical}.
For $k\in \mathbb Z^d$, let $B_R(k)=k+B_R$ the closed ball of radius $R$ centered at $k$.
Increasing $R$ if necessary, we may assume that the family $(B_R(k))_{k\in\mathbb Z^d}$ is a 
covering of $\mathbb R^d$ and that we have
$$ \int_{\mathbb R^d} \vert f\vert^2dx \leq \sum_{k\in \mathbb Z^d} \int_{B_R(k)} \vert f\vert^2dx \ .$$
Let $X\subset U_{a/2}$ be a complex neighborhood of $B_R(0)$  and for any $k\in \mathbb Z^d$
set $\omega_k=\omega \cap B_R(k)$. By assumption, one has 
$\vert \omega_k\vert \geq \delta$ for all $k$. By Proposition \ref{prop2}, there exists constants 
$C, \nu>0$ independent of $k\in\mathbb Z^d$ such that 

\begin{equation}\label{gl25}
 \int_{B_R(k)} \vert  f(x)\vert^2 dx \leq C 
 \big( \int_{\omega_k}\vert  f(x)\vert^2 dx\big)^\nu \big( \int_{X+k}  \vert f (z)\vert ^2 \vert dz \vert  \big)^{1-\nu}\ .
\end{equation}
Set 
$$ c_k=  \int_{B_R(k)} \vert  f(x)\vert^2 dx, \quad a_k= \int_{\omega_k}\vert  f(x)\vert^2 dx, 
\quad b_k=  \int_{X+k}  \vert f (z)\vert ^2 \vert dz \vert $$
By H\"older's inequality with $1/p=\nu, 1/q=1-\nu$, we get
$$  \sum_k c_k\leq C\sum_k a_k^\nu b_k^{1-\nu}\leq C (\sum_k a_k)^\nu(\sum_k b_k)^{1-\nu}\ .$$
It remains to observe that one has
$$ \sum_k a_k \leq C \int_\omega \vert f(x)\vert^2 dx, 
\quad \sum_k b_k \leq C \int_{U_a}  \vert f (z)\vert ^2 \vert dz \vert  \ .$$
The proof of Proposition \ref{proposition:interpolation inequality multiD} is complete.

\section{Spectral analysis}

\label{sec: spectral analysis}

\subsection{Description of the spectrum}

The goal of this section is to give a description of the spectrum of the operator $H_{g,V}$ defined by (\ref{eq:Schrodinger operator}). To do this, we apply the long-range scattering theory developed in \cite[Chap. 30]{HormanderVol4}, which yields the following result.

\begin{proposition}
The operator $H_{g,V}$ defined by (\ref{eq:Schrodinger operator}) is self-adjoint in the  space $L^2(\R^d; \sqrt{\det g }\ud x)$. The spectrum of $H_{g,V}$ is of the form
\begin{equation}\label{prop:description of the spectrum}
\Sigma(H_{g,V}) = \Lambda \cup  \left\{ 0 \right\} \cup H_{ac},
\end{equation} where $ H_{ac} = (0,+\infty)$ is the absolutely continuous spectrum, and 
$\Lambda$ is the set of non zero eigenvalues.
Moreover, there exists $E_0 > 0$ such that  $\Lambda \subset [-E_0, 0) $
and any eigenvalue $\lambda\in \Lambda $ is isolated with finite multiplicity.
\end{proposition}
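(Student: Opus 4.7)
The plan is to derive the spectral structure of $H_{g,V}$ from three separate ingredients: self-adjointness via Kato--Rellich, Weyl's theorem to identify the essential spectrum, and Hörmander's long-range scattering theory for the absolutely continuous part.

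First I would verify self-adjointness and a lower bound. Since $\tilde g$ and $V$ are continuous and $\tilde g(x), V(x) \to 0$ at infinity, the metric $g = \mathrm{Id} + \tilde g$ is uniformly elliptic on $\R^d$ and $\sqrt{\det g}$ is bounded above and below. In particular $L^2(\R^d; \sqrt{\det g}\,\ud x)$ coincides with $L^2(\R^d; \ud x)$ with equivalent norms. Using hypothesis \eqref{eq: metric perturbation by a symbol} restricted to $\R^d$, the coefficients of $-\Delta_g$ are smooth with bounded derivatives, so $-\Delta_g$ is essentially self-adjoint on $C_c^\infty(\R^d)$ with closure defined on $H^2(\R^d)$. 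Since $V \in L^\infty$, Kato--Rellich yields self-adjointness of $H_{g,V}$ on the same domain and the bound $H_{g,V} \geq -\|V\|_{L^\infty}$. Setting $E_0 := \|V\|_{L^\infty}$ already gives $\sigma(H_{g,V}) \subset [-E_0,+\infty)$.

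Next I would compute the essential spectrum. Write $H_{g,V} = -\Delta + P$, where $P = (-\Delta_g + \Delta) + V$ is a second-order differential operator whose coefficients tend to $0$ at infinity by \eqref{eq: metric analytic}, \eqref{eq: potential analytic}, \eqref{eq: metric perturbation by a symbol}. Using the resolvent identity together with a cutoff $\chi_R \in C_c^\infty$ equal to $1$ on $B_R$, one shows that $\chi_R (H_{g,V}-i)^{-1}$ has image in an $H^2$-bounded, compactly supported subspace (hence relatively compact in $L^2$ by Rellich), while the complementary piece $(1-\chi_R)P(-\Delta-i)^{-1}$ has operator norm tending to $0$ as $R \to \infty$. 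Hence $(H_{g,V}-i)^{-1} - (-\Delta-i)^{-1}$ is a norm limit of compacts, so compact; Weyl's theorem then gives $\sigma_{\mathrm{ess}}(H_{g,V}) = \sigma_{\mathrm{ess}}(-\Delta) = [0,+\infty)$. Combined with the previous step, any spectrum in $(-\infty, 0)$ is discrete, i.e., consists of isolated eigenvalues of finite multiplicity lying in $[-E_0, 0)$; this is exactly the description of $\Lambda$ claimed in the statement.

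Finally, for the absolutely continuous part, I would invoke the long-range scattering theory of \cite[Ch.~XXX]{HormanderVol4}. Hypothesis \eqref{eq: metric perturbation by a symbol} places $H_{g,V}$ within the framework of that chapter: $\tilde g$ and $V$ are long-range symbols decaying like $\langle x\rangle^{-\epsilon}$ together with two derivatives, which is precisely what is needed for the Mourre estimate on every compact subinterval of $(0,+\infty)$ (using the generator of dilations as a conjugate operator, suitably modified by the metric). The Mourre estimate simultaneously rules out positive eigenvalues, rules out singular continuous spectrum in $(0,+\infty)$, and shows that $(0,+\infty) \subset \sigma_{\mathrm{ac}}(H_{g,V})$. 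This is the only deep step: the previous two steps are standard, while the present one is where the symbolic decay in \eqref{eq: metric perturbation by a symbol} is essentially used. The threshold $0$ is not excluded by this theory (it can be an eigenvalue or a resonance), which is why it is singled out in the statement as the extra possible point of the spectrum. Assembling the three steps yields exactly the decomposition $\Sigma(H_{g,V}) = \Lambda \cup \{0\} \cup (0,+\infty)$ with the claimed properties of $\Lambda$.
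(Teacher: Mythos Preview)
Your argument is correct and reaches the same conclusion, but the organization differs from the paper's. The paper does not separate out a Weyl's-theorem step: instead it writes $H_{g,V}=-\Delta+\tilde V(x,\partial)$, explicitly splits $\tilde V=V^L+V^S$ into long- and short-range pieces, and checks the decay bounds that make $H_{g,V}$ a $1$-admissible perturbation in the sense of \cite[Sec.~30.2]{HormanderVol4}; then \cite[Thm.~30.2.10]{HormanderVol4} is cited as a black box for ``eigenvalues in $\R\setminus\{0\}$ are isolated of finite multiplicity,'' and the absence of positive eigenvalues is attributed separately to Simon \cite{SimonPaper}, not to the Mourre estimate. The lower bound on the spectrum is obtained exactly as you do, via the quadratic form and $\inf V$. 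Your route---Kato--Rellich, then Weyl's theorem for $\sigma_{\mathrm{ess}}=[0,\infty)$, then Mourre theory for the ac part---is more modular and perhaps more transparent about which hypothesis is used where; the paper's route is shorter because once $1$-admissibility is verified, a single citation delivers most of the spectral picture. One small imprecision on your side: the Mourre estimate by itself gives local finiteness of eigenvalues and absence of singular continuous spectrum in $(0,\infty)$, but the \emph{absence} of embedded positive eigenvalues is a separate input (virial/Carleman type), which is why the paper cites \cite{SimonPaper} independently; you should do the same rather than fold it into the Mourre step.
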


\begin{remark}
The set $\Lambda$ may be empty, or finite, or countable, and  $0$ is its only possible accumulation point.
\end{remark}

\begin{proof}

According to \cite[Sec. 30.2]{HormanderVol4}, let us split the operator $H_{g,V}$ in  the following manner 
\begin{equation}
H_{g,V} = -\Delta + \tilde{V}(x,\partial), 
\label{eq: splitting of H}
\end{equation} where $\Delta$ is the flat Laplacian in $\R^d$ and 
\begin{equation*}
\tilde{V}(x,\partial) = V(x) - \div \left[ ( g^{-1} - Id) \nabla \right] - \frac{ g^{-1} }{\sqrt{ \det g }  } \nabla ( \sqrt{\det g} ) \cdot \nabla. 
\end{equation*} Observe that we may further write 

\begin{align}
\tilde{V}(x,\partial) & = \partial_i \left[ (\delta_{ij} - g^{ij}) \partial_j \right] - \frac{ g^{-1} }{\sqrt{ \det g }  } \nabla ( \sqrt{\det g} )  \cdot \nabla + V(x)  \label{eq:splitting short long range} \\
& = V^L(x,\partial) + V^S(x,\partial), \nonumber
\end{align} with 
\begin{align}
V^L(x,\partial) & :=  (  \delta_{ij} -  g^{ij}  ) \partial^2_{i,j} - \partial_i(g^{ij})  \partial_j  + V(x), \label{eq: long range}\\
V^S(x,\partial) & := - \frac{ 1}{\sqrt{ \det g }  } g^{ij} \partial_j ( \sqrt{ \det g } ) \partial_i, \label{eq: short range}
\end{align}
 where we have used the expression in components and Einstein's convention. We have to verify that the operator $ \tilde{V} = V^S + V^L $ is a 1-admissible perturbation of the flat Laplacian
 , i.e., the short-range $ V^S(x,\partial) = \sum_{i=1}^d V^S_i (x) \partial_i $ satisfies 
\begin{equation}
V^S_i (x) \leq C_{S}(1 + |x|)^{-1 - \epsilon},  \label{eq:short range perturbation property} \\
\end{equation} and the long-range part $V^L(x,\partial)=\sum_{\vert\alpha\vert\leq 2}V_\alpha^L(x)\partial^\alpha$
satisfies
\begin{align}
& \exists \epsilon>0, \forall |\alpha| \leq 2, \quad  |\partial^{\beta} {V}^L_{\alpha}(x) | \leq C_{\alpha,\beta}(1 + |x|)^{-|\beta| - \epsilon}, |\beta| = 0,1,
\label{eq:long range perturbation property}
\end{align}
The   estimates \eqref{eq:short range perturbation property} and \eqref{eq:long range perturbation property} follows 
from \eqref{eq: metric perturbation by a symbol} and  the fact that one can write $g^{-1}=Id+\hat g$ and $\det g=1+f$
where $\hat g$ and $f$ satisfy \eqref{eq: metric perturbation by a symbol}.
Thus the operator $H_{g,V}$ is a $1$-admissible perturbation of the flat Laplacian.
As a consequence of  \cite[Theorem 30.2.10, p.295]{HormanderVol4} we get that the eigenvalues of 
the operator $H_{g,V}$ in $ \R \setminus \left\{ 0 \right\}$ are isolated with  finite  multiplicity. Furthermore, applying \cite[Theorem 1, p. 530]{SimonPaper} ensures that $H_{g,V}$ does not have eigenvalues in $\R_+$.
Finally, since for $f \in D(H_{V,g})$ one has 
\begin{equation*}
\left\langle H_{g,V} f, f \right\rangle_{L^2(\R^d; \sqrt{\det g} \ud x)}  \geq  \int_{\R^d}  V(x) |f|^2 \sqrt{\det g} \ud x 
\geq \inf_{x\in \mathbb R^d} V(x) \| f \|^2_{L^2(\R^d; \sqrt{g} \ud x)}, 
\end{equation*} one get  $\Sigma(H_{g,V})\subset [-E_0,0)$ for some $E_0>0$.

\end{proof}

\subsection{Spectral projectors and Poisson kernels}

According to the spectral theorem ( cf. \cite[Section 2.5]{Davies}) applied to the self-adjoint operator $H_{g,V}$, there exist a  measure $\ud \nu$ on $\R \times \N$, supported in $\Sigma(g,V) \times \N$, and a unitary operator 
\begin{equation*}
U : L^2(\R^d; \sqrt{\det g} \ud x ) \longrightarrow L^2(\R \times \N; \ud \nu )
\end{equation*} such that
\begin{equation}
U H_{g,V} U^{-1} (h)= \sigma h(\sigma, n), \quad h\in L^2(\R \times \N; \ud \nu ). 
\label{eq: diagonalisation}
\end{equation} 
If $F$ is a bounded Borel measurable function on $\R$, the operator
$F(H_{g,V}) $ is defined by the formula 
\begin{equation}
U F(H_{g,V}) U^{-1} = F(\sigma). 
\label{eq: calcul fonctionnel}
\end{equation}
In particular, for $\lambda\in \mathbb R$,  the spectral projector $\Pi_{\lambda}(g,V)$,
associated with the function $F(\sigma)=\mathds{1}_{\sigma<\lambda}$,
is defined by
\begin{equation}
\Pi_{\lambda}(g,V)(f)=U^{-1}\big( \mathds{1}_{\sigma<\lambda}U(f)\big).
\label{eq:spectral projectors}
\end{equation} 
For $s\geq 0$, we define the Poisson operator  $\mathbb{P}_{s,\pm}$, associated with the function
$F(\sigma)=e^{-s\sigma_\pm^{1/2}}$, by the formula 

\begin{equation}
\mathbb{P}_{s,\pm}(f)=U^{-1}\big( e^{-s\sigma_\pm^{1/2}}U(f)\big),
\label{eq:poisson operator}
\end{equation} 
where  the function $\sigma_\pm^{1/2}$ is defined in \eqref{eq: squqre root of E}.

\begin{proposition}
Let $f \in L^2(\R^d; \sqrt{\det g} \ud x)$ and set $u_\pm(s,x)=\mathbb{P}_{s,\pm}(f)(x)$.
Then $u_\pm(s,x)$ satisfies the following 
 elliptic boundary value problem
\begin{equation}
\left\{
\begin{array}{ll}
(- \partial_s^2 + H_{g,V} )u = 0, &  \textrm{in } (0, \infty) \times \R^d, \\
\lim_{s\rightarrow 0^+}u(s,x)=f(x), & \text{in}\ 
L^2(\R^d; \sqrt{\det g} \ud x). 
\end{array} \right.
\label{eq: Poisson boundary problem}
\end{equation} 
\end{proposition}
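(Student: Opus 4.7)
The plan is to transport the entire problem to the spectral side via the unitary $U$ from \eqref{eq: diagonalisation}, where it reduces to a scalar ODE in the variable $s$ that can be treated pointwise in $(\sigma,n)\in \R\times\N$. Indeed, by \eqref{eq:poisson operator}, if we set $\hat f(\sigma,n):=U(f)(\sigma,n)\in L^2(\R\times\N;\ud\nu)$, then the candidate solution corresponds on the spectral side to
\begin{equation*}
\hat u_\pm(s,\sigma,n) = e^{-s\sigma_\pm^{1/2}}\hat f(\sigma,n).
\end{equation*}
Since $\nu$ is supported in $\Sigma(H_{g,V})\times\N\subset[-E_0,+\infty)\times\N$ and $|e^{-s\sigma_\pm^{1/2}}|\le 1$ on $[-E_0,\infty)$ for all $s\ge 0$ (for $\sigma\ge 0$ the exponent is real and nonpositive, while for $\sigma<0$ the exponent is purely imaginary), the function $\hat u_\pm(s,\cdot)$ lies in $L^2(\R\times\N;\ud\nu)$.

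Next I would verify the PDE. The key algebraic observation is that $(\sigma_\pm^{1/2})^2=\sigma$ by the very definition \eqref{eq: squqre root of E}, so that for each fixed $(\sigma,n)$ and $s>0$,
\begin{equation*}
\partial_s^2 \hat u_\pm(s,\sigma,n) = (\sigma_\pm^{1/2})^2 e^{-s\sigma_\pm^{1/2}}\hat f(\sigma,n)=\sigma\, \hat u_\pm(s,\sigma,n).
\end{equation*}
To lift this to the level of operators, I would observe that for every $s>0$ the multipliers $\sigma_\pm^{1/2}e^{-s\sigma_\pm^{1/2}}$ and $\sigma e^{-s\sigma_\pm^{1/2}}$ are bounded functions on $[-E_0,\infty)$; hence $\hat u_\pm(s,\cdot)$, $\partial_s\hat u_\pm(s,\cdot)$, $\partial_s^2\hat u_\pm(s,\cdot)$ and $\sigma\hat u_\pm(s,\cdot)$ all belong to $L^2(\R\times\N;\ud\nu)$ and depend continuously on $s$ in that space. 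In particular, $u_\pm(s,\cdot)=U^{-1}\hat u_\pm(s,\cdot)$ lies in $D(H_{g,V})$ for every $s>0$ by the functional calculus \eqref{eq: calcul fonctionnel}, and applying $U^{-1}$ to the pointwise identity $(-\partial_s^2+\sigma)\hat u_\pm=0$ yields $(-\partial_s^2+H_{g,V})u_\pm=0$ in $L^2(\R^d;\sqrt{\det g}\ud x)$ for all $s>0$.

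For the boundary trace, I would apply dominated convergence on the spectral side: as $s\to 0^+$, $e^{-s\sigma_\pm^{1/2}}\hat f(\sigma,n)\to \hat f(\sigma,n)$ pointwise, and $|e^{-s\sigma_\pm^{1/2}}\hat f|\le |\hat f|\in L^2(\R\times\N;\ud\nu)$ by the uniform bound established above. Hence $\hat u_\pm(s,\cdot)\to \hat f$ in $L^2(\R\times\N;\ud\nu)$, and applying the isometry $U^{-1}$ gives $u_\pm(s,\cdot)\to f$ in $L^2(\R^d;\sqrt{\det g}\ud x)$, which is the second item of \eqref{eq: Poisson boundary problem}.

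The only mildly delicate point is justifying that differentiation in $s$ commutes with $U^{-1}$ so that one may write $\partial_s^k u_\pm = U^{-1}\partial_s^k \hat u_\pm$ for $k=1,2$; this is where the uniform boundedness of $\sigma_\pm^{1/2}e^{-s\sigma_\pm^{1/2}}$ and $\sigma e^{-s\sigma_\pm^{1/2}}$ on the spectrum (for $s$ in any compact subset of $(0,\infty)$) is used, via a difference-quotient argument combined with dominated convergence. Everything else is a routine application of the spectral theorem, and no further regularity beyond what the functional calculus provides is needed.
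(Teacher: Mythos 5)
Your proof is correct and follows essentially the same route as the paper: conjugating by the unitary $U$ from the spectral theorem reduces everything to the scalar identity $(-\partial_s^2+\sigma)\,e^{-s\sigma_\pm^{1/2}}\hat f(\sigma,n)=0$ together with the boundary limit on the spectral side. The additional justifications you give (boundedness of the multipliers on $[-E_0,\infty)$, dominated convergence as $s\to 0^+$, commuting $\partial_s$ with $U^{-1}$) are precisely the routine details the paper leaves implicit.
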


\begin{proof}
Setting 
\begin{equation*}
v(s,\sigma,n) = U(u(s, .)), \qquad g(\sigma, n) = Uf(x),
\end{equation*} the boundary value problem (\ref{eq: Poisson boundary problem}) writes 
\begin{equation}
\left\{
\begin{array}{ll}
(- \partial_s^2 + \sigma )v(s,\sigma,n) = 0, &  \textrm{in } (0, \infty) \times \R \times \N, \\
\lim_{s\rightarrow 0^+}v(s,\sigma, n)=g(\sigma,n), & \text{in}\ 
L^2(\R \times \N;  \ud \nu). 
\end{array} \right.
\label{eq: Poisson boundary problem diagonalise}
\end{equation} which is true since by construction one has 
\begin{equation*}
v(s,\sigma,n) = e^{-s \sigma_{\pm}^{\frac{1}{2}}} g(\sigma,n) . 
\end{equation*} 
\end{proof}

\section{ Analytic estimates for second order elliptic operators}
\label{sec: holomorphic extension multiD}
\subsection{Holomorphic extensions estimates}
Throughout this section we use $N\in \N$ instead of $d$ to denote the dimension of the space. Let $B_R=\{x\in \mathbb R^N, \vert x\vert <R\}$ and let $X\subset \mathbb C^N$ a complex neighborhood of 
the closed ball $\overline B_R$. Let $D_0>0, d_0>0$ be given. We denote by $\mathcal Q=\mathcal Q_{X, D_0, d_0}$ the family of  
second order differential operators $Q(x,\partial_x)$ 
of the form

\begin{equation}\label{gl30}
Q(x,\partial_x)=\sum_{\vert \alpha\vert \leq 2}q_\alpha(x)\partial^\alpha
\end{equation}
where the functions $q_\alpha(x)$ are  holomorphic in $X$, and such that
\begin{equation}\label{gl31}
\begin{aligned}
& \sup_\alpha \Vert q_\alpha\Vert_{L^\infty (X)} \leq D_0, \\
& \sum_{\vert \alpha \vert=2} \Re(q_\alpha(x))\xi^\alpha \geq d_0\vert \xi\vert^2, \quad \forall x\in X, \quad \forall \xi\in \mathbb R^N\ .
\end{aligned}
\end{equation}
Let $R'\in ]0,R[$. The goal of this section is to prove the two following propositions.
In this subsection, we will denote by $C_j$ various constants  independent of $Q\in \mathcal Q$ and of a particular solution $u\in L^2(B_R)$ of the equation $Qu=0$.

\begin{proposition}\label{prop4}
There exists a constant $C$  such that for any $Q\in \mathcal Q$
and $u\in L^2(B_R)$ such that $Qu=0$, the following inequality holds true
\begin{equation}\label{gl32}
\Vert u\Vert_{L^\infty(B_{R'})} \leq C\Vert u\Vert_{L^2(B_R\setminus B_{R'})}
\end{equation}
\end{proposition}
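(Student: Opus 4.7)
The plan is to combine the holomorphic interpolation inequality of Proposition \ref{prop2} with interior elliptic regularity and a uniform quantitative analyticity statement for the family $\mathcal Q$. Fix an intermediate radius $R' < R_2 < R$.

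First, I would invoke classical interior $L^\infty$--$L^2$ estimates for uniformly elliptic operators with bounded coefficients (De Giorgi--Nash--Moser or Schauder type) to obtain a constant $C_1$, depending only on $D_0,d_0,R',R_2$, such that
$$ \|u\|_{L^\infty(B_{R'})} \le C_1\, \|u\|_{L^2(B_{R_2})}. $$
This reduces the problem to the refined $L^2$-bound $\|u\|_{L^2(B_{R_2})} \le C\, \|u\|_{L^2(B_R \setminus B_{R'})}$.

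The next and most delicate ingredient is a \emph{uniform} holomorphic extension. Since the coefficients $q_\alpha$ are holomorphic on $X$ with the bounds \eqref{gl31} and $Q$ is uniformly elliptic, a quantitative Morrey--Nirenberg analytic regularity argument produces a complex neighborhood $\tilde X \subset X$ of $\overline{B}_{R_2}$ and a holomorphic extension $\tilde u$ of $u$ to $\tilde X$ satisfying
$$ \int_{\tilde X} |\tilde u(z)|^2\, |dz| \le C_2 \int_{B_R} |u(x)|^2\, dx, $$
with $\tilde X$ and $C_2$ depending only on $X, D_0, d_0, R, R_2$, and in particular not on the specific $Q \in \mathcal Q$. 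I expect this step to be the main obstacle: real analyticity of $u$ is classical, but tracking the constants in the Morrey--Nirenberg scheme uniformly across the whole family $\mathcal Q$ requires some care.

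Finally, I apply Proposition \ref{prop2} to $\tilde u$ on $B_{R_2}$ with $E := (B_R \setminus B_{R'}) \cap B_{R_2}$, whose Lebesgue measure is bounded below by a constant depending only on $R', R_2, R$. Taking $\tilde X$ as the complex neighborhood, this yields $\delta \in (0,1]$ and
$$ \|u\|_{L^2(B_{R_2})}^2 \le C\, \|u\|_{L^2(B_R \setminus B_{R'})}^{2\delta}\, \|u\|_{L^2(B_R)}^{2(1-\delta)}. $$
Setting $A := \|u\|_{L^2(B_{R_2})}$ and $B := \|u\|_{L^2(B_R \setminus B_{R'})}$, the inclusion $B_R \setminus B_{R_2} \subset B_R \setminus B_{R'}$ gives $\|u\|_{L^2(B_R)} \le A + B$, whence $A^2 \le C' B^{2\delta}(A + B)^{2(1-\delta)}$. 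The trivial dichotomy (either $A \le B$, or $A > B$ in which case $A+B \le 2A$ and $A^{2\delta}$ can be absorbed) yields $A \le C_3 B$, and combined with the interior regularity step this proves \eqref{gl32}.
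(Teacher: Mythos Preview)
Your argument is correct, but it follows a genuinely different route from the paper. The paper proves \eqref{gl32} by a soft compactness--contradiction argument: a parametrix construction gives a uniform bound $\|\varphi u\|_{H^s}\le C\|u\|_{L^2(B_R)}$ for $s>N/2$; if \eqref{gl32} failed one would extract sequences $Q_n\in\mathcal Q$, $u_n$ with $Q_nu_n=0$, $\|u_n\|_{L^\infty(B_{R'})}=1$, $\|u_n\|_{L^2(B_R\setminus B_{R'})}\to 0$, pass to a limit $(Q,u)$ with $Qu=0$ and $u=0$ on the annulus, and contradict analytic unique continuation.

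The main structural difference is that in the paper Proposition~\ref{prop5} (uniform holomorphic extension with $L^2$ bounds) is proved \emph{after} Proposition~\ref{prop4}, and in fact relies on it: the estimate \eqref{gl33} is obtained by deforming $B_R$ into complex contours $\Sigma_w$ and applying Proposition~\ref{prop4} to the induced operators $Q_w$. Your strategy inverts this dependency: you first obtain a uniform holomorphic extension via quantitative Morrey--Nirenberg estimates (which, thanks to the uniform $L^\infty(X)$ bound $D_0$ and Cauchy's inequalities, do yield uniform derivative bounds over $\mathcal Q$), and then feed this into Proposition~\ref{prop2}. This is legitimate and avoids any circularity, since Proposition~\ref{prop2} is established independently in Section~\ref{sec: carleman-interpolation}.

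The trade-off is clear: the paper's route is shorter and conceptually clean but non-constructive (the constant $C$ comes from compactness), whereas your route is in principle fully quantitative but front-loads the technical work into the Morrey--Nirenberg step --- which, as you rightly flag, requires care to make uniform in $Q\in\mathcal Q$. Once that step is granted, your closing algebra with $A$ and $B$ is correct.
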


\begin{proposition}\label{prop5}
There exists  constants $C_j>0$  such that for any $Q\in \mathcal Q$
and $u\in L^2(B_R)$ such that $Qu=0$, the function $u$ extends as a holomorphic function in the set
$$ Y= \{z\in \mathbb C^N, \vert Re(z)\vert<R', \vert Im(z)\vert<C_1(R'-\vert Re(z)\vert) \}$$
and the following inequality holds true
\begin{equation}\label{gl33}
\sup_{z\in Y}\vert u(z)\vert \leq C_2 \Vert u\Vert_{L^2(B_R)} 
\end{equation}
\end{proposition}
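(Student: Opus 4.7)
\medskip

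\noindent\textbf{Proof proposal.} The plan is to combine Proposition~\ref{prop4} with a quantitative form of the classical Morrey--Nirenberg theorem on analyticity of solutions to elliptic equations with analytic coefficients, applied uniformly over the family $\mathcal Q_{X,D_0,d_0}$. Proposition~\ref{prop4} provides an a priori $L^\infty$ control inside $B_R$; quantitative analyticity translates such control into a domain of holomorphic extension whose radius at each interior point is proportional to the distance to $\partial B_R$. The tent shape of $Y$ emerges naturally because the radius of holomorphic extension at $x_0\in B_{R'}$ is proportional to $R-|x_0|\geq R'-|x_0|$.

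Concretely, I would first apply Proposition~\ref{prop4}, adapted to an intermediate radius $R'':=(R+R')/2$ in place of $R'$, to obtain $\Vert u\Vert_{L^\infty(B_{R''})}\leq C\Vert u\Vert_{L^2(B_R)}$ with $C$ depending only on $X,D_0,d_0,R,R'$. I would then invoke the quantitative Morrey--Nirenberg estimate: since each $Q\in\mathcal Q_{X,D_0,d_0}$ has coefficients that extend holomorphically to $X$ under the uniform bounds \eqref{gl31}, there exist constants $c_0\in(0,1)$ and $K>0$ depending only on $X,D_0,d_0$ such that, for every $y_0\in B_R$ and every $\rho>0$ with $\overline{B_\rho(y_0)}\subset B_R$, any $v\in L^2(B_R)$ with $Qv=0$ extends holomorphically to $\{z\in\comp^N : |z-y_0|<c_0\rho\}$ with
$$
\sup_{|z-y_0|<c_0\rho}|v(z)|\leq K\,\Vert v\Vert_{L^\infty(B_\rho(y_0))}.
$$
The uniformity of $c_0,K$ across the class $\mathcal Q$ amounts to tracking constants in the standard inductive proof of analyticity via $L^2$ elliptic a priori estimates.

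Combining both steps, for every $x_0\in B_{R'}$ set $\rho_{x_0}:=R''-|x_0|$, so $\rho_{x_0}\geq R''-R'>0$ and $\rho_{x_0}\geq R'-|x_0|$. Then $u$ extends holomorphically to $\{|z-x_0|<c_0(R''-|x_0|)\}$, a set that contains $\{|z-x_0|<c_0(R'-|x_0|)\}$, with the bound $CK\Vert u\Vert_{L^2(B_R)}$. For any $z\in Y$, putting $x_0:=\Re z\in B_{R'}$ gives $|z-x_0|=|\Im z|<C_1(R'-|x_0|)$, and choosing $C_1:=c_0$ places $z$ in the local disc of holomorphy. This yields the desired extension with $C_2:=CK$.

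The main obstacle is the \emph{uniform} quantitative Morrey--Nirenberg statement, i.e.\ the independence of $c_0$ and $K$ on the particular $Q\in\mathcal Q_{X,D_0,d_0}$: while the qualitative analyticity result is classical, the uniform quantitative form requires careful bookkeeping through the iterative proof on the order of derivatives. An alternative route would be to derive analyticity via a Cauchy--Kovalevskaya argument applied to the complexified elliptic operator $Q(z,\partial_z)$, exploiting the fact that its principal symbol is non-characteristic with respect to any real codirection at real points; this may yield the uniform dependence of the constants more transparently.
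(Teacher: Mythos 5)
Your strategy is viable and it is genuinely different from the paper's. You reduce everything to (i) the interior $L^\infty$ bound of Proposition~\ref{prop4} at an intermediate radius and (ii) a quantitative, uniform Morrey--Nirenberg estimate giving a holomorphic extension radius proportional to the distance to the boundary, with constants depending only on $(X,D_0,d_0)$; the tent $Y$ and the bound \eqref{gl33} then follow by the pointwise patching you describe, and your bookkeeping of radii ($\rho_{x_0}=R''-|x_0|$, $C_1:=c_0$, $C_2:=CK$) is consistent. The paper deliberately avoids exactly the ingredient you flag as the main obstacle. For the extension it does not use derivative estimates at all: it sweeps the tent by the family of domains $\Omega_r$ and continues $u$ across their boundaries with Zerner's Lemma~\ref{lemma:Zerner}, the smallness of $C_1$ coming from an explicit non-characteristicity computation ($|\xi_0|\le\sqrt{C_1}|\eta_0|$ plus the ellipticity line of \eqref{gl31}), which is automatically uniform in $Q\in\mathcal Q$. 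For the sup bound it deforms the real domain to the contours $\Sigma_w=\{x+iw\varphi(x)\}$, observes that $u_w(x)=u(x+iw\varphi(x))$ solves $Q_wu_w=0$ with $Q_w$ still in a class $\mathcal Q(X',D_0',d_0')$ for $|w|$ small, and applies Proposition~\ref{prop4} to $u_w$; thus uniformity over $\mathcal Q$ is inherited from the compactness argument behind Proposition~\ref{prop4} rather than from quantitative analyticity. What your route buys is a more self-contained and constructive bound (explicit $c_0,K$ once the interior analytic estimates are in hand, no contour deformation); what the paper's route buys is precisely that no uniform constant-tracking through the Morrey--Nirenberg induction is needed. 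To make your argument complete you must either cite a version of the interior analyticity estimates in which the constants are stated to depend only on the ellipticity constant, the dimension, and sup-bounds of the holomorphically extended coefficients on a fixed neighborhood (this is true, since Cauchy estimates on $X$ control all coefficient derivatives uniformly over $\mathcal Q$), or actually carry out the induction (or your Cauchy--Kovalevskaya variant); as written, that key uniform lemma is asserted rather than proved, and it is the entire technical content of the step.
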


\begin{remark}
It will be essential in the proof of Proposition \ref{prop6} below that the constants $C_j$
can be chosen independent of $Q\in\mathcal Q$. 
\end{remark}

\begin{proof}
The proof of Proposition \ref{prop4} is classical. Let $\varphi\in C_0^\infty (B_R)$
equal to $1$ in a neighborhood of $\overline B_{R'}$ and $\psi\in C_0^\infty (B_R)$
equal to $1$ in a neighborhood of the support of $\varphi$.  Let $s>N/2$. 
By classical pseudo-differential calculus, since $Q$ is elliptic, there exist
a pseudo-differential operator $E$ of degree $-2$ such that 

$$ EQ=\varphi+ \psi  T \psi. $$
where $T$ is a pseudo-differential operator of degree $-s$ such that
$$ \Vert \psi T(\psi f)\Vert_{H^{s}} \leq C_2 \Vert f\Vert_{L^2(B_R)}$$
Since the construction of $E,T$ involves only a finite number of derivatives of the coefficients
of $Q$, from \eqref{gl31}, the constant  $C_2$ is independent of $Q\in\mathcal Q$. From $Qu=0$,
we get $\varphi u = -\psi  T \psi u $ and therefore
\begin{equation}\label{gl34}
 \Vert \varphi u\Vert_{H^{s}} = \Vert \psi T(\psi u)\Vert_{H^{s}}
\leq C_2 \Vert u\Vert_{L^2(B_R)} \ .
\end{equation}
Let us now prove \eqref{gl32} by a contradiction argument. If \eqref{gl32} is untrue, one can find a sequence
$Q_n\in \mathcal Q$ and a sequence $u_n\in L^2(B_R)$ such that $Q_nu_n=0$, 
$\Vert u_n\Vert_{L^\infty(B_{R'})}=1$ and $\Vert u_n\Vert_{L^2(B_R\setminus B_{R'})} \rightarrow 0$.
The sequence $u_n$ is bounded in $ L^2(B_R)$ and  from \eqref{gl34}, $\varphi u_n$ is bounded
in $H^{s}$. Thus we may assume that $u_n$  weakly converge in $L^2$ to some $u\in L^2(B_R)$
and, since $s>N/2$,  that $u_n$ converge strongly to $u$ in $L^\infty(B_{R'})$.
Then we have $\Vert u \Vert_{L^\infty(B_{R'})}=1$ and $u=0$ on $B_R\setminus B_{R'}$. Let $X'$ 
an open  neighborhood of $B_R$
such that  $\overline{X'}\subset X$. We may also assume that $Q_n$ converge in $\mathcal Q(X', D_0,d_0)$
to some $Q\in \mathcal Q(X', D_0,d_0)$. Then $u$ satisfies $Qu=0$ and since $Q$ is elliptic
with analytic coefficient and $u=0$ on $B_R\setminus B_{R'}$ we get $u=0$ on $B_R$, in contradiction
with $\Vert u \Vert_{L^\infty(B_{R'})}=1$.\\

In order to prove Proposition \ref{prop5}, we will use  complex deformation arguments.\\

\noindent
We first prove that for $C_1>0$  small enough $u$ extends as a holomorphic function 
in $Y$ .
For $r \in [0,R'] $ let us define the non negative function $\psi_{r}(t)$: 
\begin{equation*}
\psi_{r}(t):= \max( R' - \sqrt{r^2 + t^2}, 0), \qquad t\in \R .
\end{equation*} The function $\psi_r$ is  Lipschitz with $\vert \psi'(r)\vert \leq 1$,  and 
$\supp \psi_{r} = \left\{ |t| \leq \sqrt{R'^2 - r^2}\right\}$. Observe that $\psi_r(t)$ is decreasing in 
$r$, $\psi_{R'}(t) = 0$,  $\psi_{0}(t) = \max( R' - |t|, 0)$.
Take $C_1>0$ small. For $r \in [0,R'] $, let $K_r$ be the compact set in $\mathbb C^N$
\begin{equation*}
K_{r} := \left\{ z \in \mathbb C^N; \vert \Im(z)\vert^2  \leq C_1\psi^2_{r}(| \Re (z)|),
 \, | \Re z| \leq \sqrt{R'^2 - r^2} \right\} \ .
\end{equation*} 
The interior $\Omega_r$ of  $K_{r}$,  is given by 
\begin{equation*}
\Omega_r := \left\{ z \in \mathbb C^N; \vert \Im(z)\vert^2 <C_1\psi^2_{r}(| \Re (z)| ),
 \, | \Re z| < \sqrt{R'^2 - r^2} \right\} \ .
\end{equation*} 
The open sets $\Omega_r$ are decreasing in $r$ and one has
$\Omega_{R'}=\emptyset$, $\Omega_0=Y$. \\
 Let $I=\{r\in [0,R'[$ such that  $u$ extends as a holomorphic function 
in $\Omega_r \}$. \\
Since $Qu=0$, the function $u$ is analytic near $\Im(z) =0$. Thus for $r$ close to 
$R'$ one has $r\in I$. From $\cup_{r>\rho}\Omega_r=\Omega_\rho$, we get that $I$ is of the form  $[r_0,R'[$.
In order to prove $r_0=0$, it is sufficient to prove that if $0<r\in I$,  $u$ extends near any point
$z_0\in K_r\setminus\Omega_r$. If $\Im (z_0)=0$, this is true since $u$ is analytic in $B_R$.
If $\Im (z_0)\not=0$, one has $ | \Re (z_0)| < \sqrt{R'^2 - r^2}$, $\vert \Im(z_0)\vert^2 = C_1\psi^2_{r}(\vert \Re z_0\vert )$,
and locally near $z_0$, $\Omega_r$ is defined by $f<0$ with 
$$ f(z)=  \vert \Im(z)\vert^2- C_1\psi^2_{r}(| \Re (z)| ) . $$
Let $\partial=\frac{1}{2}(\partial_x-i\partial_y)$. At $z_0=a+ib$, one has $b^2=C_1\psi^2_r(\vert a\vert)$ and 
$$ \partial f(z_0)=\zeta_0= \xi_0+i\eta_0, \quad \xi_0=C_1\psi_r(\vert a\vert)\frac{a}{\sqrt{r^2+a^2}},
\quad \eta_0=-b \ .$$
This implies $\vert\xi_0\vert \leq \sqrt {C_1}  \vert \eta_0\vert $.
Therefore, if $q(z,\zeta)=\sum_{\vert\alpha\vert=2}q_\alpha(z)\zeta^\alpha$ is the principal symbol of 
$Q$, one finds, using $\vert b\vert= \sqrt{C_1}\psi_r(\vert a\vert) \leq \sqrt{C_1}R'$
$$ \Re q(z_0, \zeta_0)= \Re q(a, \zeta_0)+O(\vert b\vert   \vert \zeta_0 \vert ^2)=
-\Re q(a,b)+O(\sqrt{C_1}b^2)\ .$$
By  the second line of \eqref{gl31} this implies $q(z_0, \zeta_0)\not= 0$ for $C_1$ small.
Then the result follows from the Zerner Lemma that we  recall for the reader's convenience.

\begin{lemma}[M.Zerner]
Let  $Q(z, \partial ) = \sum_{\alpha,|\alpha| \leq m} q_{\alpha} (z)\partial_z^{\alpha}$ be a linear differential operator with holomorphic coefficients defined in a neighborhood $U$ of $0$ in $\comp^N$ and let $q(z,\zeta) = \sum_{|\alpha|=m} q_{\alpha}(z) \zeta^{\alpha}$ be its principal symbol. Let $f \in \C^1( \comp^N; \R)$ be a real function such that 
$f(0) = 0$ and  $ \partial f(0) \not= 0$.  Let $u$ be a holomorphic function defined in $U \cap \left\{f  < 0 \right\}$, such that 
$Qu$ extends holomorphically to $U$. Then, if 
$q(0, \partial f (0) ) \not = 0$,  $u$ extends holomorphically near  $0$.
\label{lemma:Zerner}
\end{lemma}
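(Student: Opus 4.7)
The plan is to obtain the extension by solving an auxiliary Cauchy problem for the equation $Qv = Qu$ on a complex hyperplane lying inside $\{f<0\}$ close to $0$, and then to identify the resulting holomorphic solution $v$ with $u$ by uniqueness in the Cauchy--Kowalevski theorem. The non-characteristic hypothesis is precisely what makes this Cauchy problem well-posed.

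First I would reduce to a normal form. A linear holomorphic change of coordinates in $\comp^N$ brings the problem to the case $\partial f(0) = e_1 := (1,0,\ldots,0)$. Since $f$ is real-valued, $df = 2\Re(\partial f \cdot dz)$ and so
\[
f(z) = 2\Re(z_1) + O(|z|^2)
\]
near $0$. The hypothesis $q(0,e_1)\neq 0$ means that the coefficient $c(z)$ of $\partial_{z_1}^m$ in $Q$ does not vanish at $0$, hence is invertible in a neighborhood of $0$. Dividing the equation $Qu = h$ (with $h := Qu$ holomorphic in $U$) by $c$ puts it in the Kowalevski normal form
\[
\partial_{z_1}^m u = \sum_{|\alpha|\leq m,\, \alpha_1<m} b_\alpha(z)\, \partial^\alpha u + \tilde h(z),
\]
with $b_\alpha$, $\tilde h$ holomorphic in a fixed neighborhood of $0$.

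Next, for each small $\epsilon>0$, consider the complex hyperplane $S_\epsilon := \{z_1 = -\epsilon\}$. From the expansion of $f$, the intersection $S_\epsilon \cap \{|z'|<\rho\}$ lies in $\{f<0\}$ whenever $\rho \lesssim \sqrt{\epsilon}$. Since $u$ is holomorphic on $U\cap\{f<0\}$, the Cauchy traces
\[
g_k^\epsilon(z') := \partial_{z_1}^k u(-\epsilon, z'),\qquad k=0,1,\ldots,m-1,
\]
are holomorphic on $\{|z'|<\rho\}\subset \comp^{N-1}$. The (linear) Cauchy--Kowalevski theorem then produces a unique holomorphic solution $v_\epsilon$ of the normal-form equation with data $(g_k^\epsilon)_{k<m}$ on $S_\epsilon$, defined on an open polydisc centred at $(-\epsilon,0)\in \comp^N$. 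Because the equation is linear and the coefficients $b_\alpha$, $\tilde h$ are holomorphic in a fixed neighborhood of $0$, the radius of this polydisc is comparable to the radius $\rho\sim\sqrt{\epsilon}$ of the Cauchy data, with a constant depending only on the coefficients of $Q$; hence for $\epsilon$ small enough the polydisc contains the origin. By the uniqueness part of Cauchy--Kowalevski, $v_\epsilon$ agrees with $u$ on the connected component of its domain intersected with $\{f<0\}$, so $v_\epsilon$ extends $u$ holomorphically past $\{f=0\}$ to a neighborhood of $0$.

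The main point to verify carefully is the claim that the Cauchy--Kowalevski domain has size comparable to the radius $\sqrt{\epsilon}$ of the Cauchy disc, uniformly in $\epsilon$. This is the quantitative content of the linear Cauchy--Kowalevski theorem (obtained, e.g., via the method of holomorphic majorants or an Ovsyannikov--Nirenberg scales-of-Banach-spaces argument), and it is essential because $\epsilon\to 0$; without this uniformity one would only get $v_\epsilon$ in a polydisc shrinking faster than $\epsilon$, and the solution would fail to cross the real hypersurface $\{f=0\}$. An alternative workaround, which avoids tracking this scaling, is to replace $S_\epsilon$ by a curved complex-analytic hypersurface $\{z_1 = -\epsilon + \phi(z')\}$ that bulges further into $\{f<0\}$, producing a Cauchy disc of fixed size on which one can apply Cauchy--Kowalevski directly.
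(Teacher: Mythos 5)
The paper does not prove this lemma: it is recalled as a known result and attributed to Zerner's note \cite{Zerner}, so there is no internal proof to compare against. Your sketch is essentially the classical proof of Zerner's theorem: reduce to Kowalevski normal form using $q(0,\partial f(0))\neq 0$, put Cauchy data on complex hyperplanes $\{z_1=-\epsilon\}$ lying in $\{f<0\}$, solve by the linear Cauchy--Kowalevski theorem, and identify the solution with $u$ by uniqueness plus analytic continuation (for which you should note that, locally, $\{f<0\}$ is connected because $\{f=0\}$ is a $\C^1$ graph, so $u$ and $v_\epsilon$ agree on the whole overlap). You correctly isolate the crux, namely the quantitative form of linear Cauchy--Kowalevski: the existence domain has radius $c\rho$ with $c$ depending only on bounds for the (normalized) coefficients, not on the data; this is exactly the ingredient that makes the domain of $v_\epsilon$ reach past $\{f=0\}$, and it is available for linear equations (majorant method), so the strategy is sound.

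One step as written is not justified by the hypotheses: you expand $f(z)=2\Re(z_1)+O(|z|^2)$, but $f$ is only assumed $\C^1$, so you may only write $f(z)=2\Re(z_1)+o(|z|)$. The conclusion survives, but the scaling changes: instead of data discs of radius $\rho\sim\sqrt{\epsilon}$ you get, for any $\delta>0$, a neighborhood in which $|f(z)-2\Re(z_1)|\leq\delta|z|$, hence $S_\epsilon\cap\{|z'|<\rho\}\subset\{f<0\}$ for $\rho\sim(2/\delta)\epsilon$. The Cauchy--Kowalevski domain then has radius $\sim c\,(2/\delta)\epsilon$, and it contains the origin provided $\delta$ is chosen small compared with the constant $c$ coming from the coefficients; so the choice of the neighborhood (i.e., of $\delta$) must be coupled to $c$, rather than being free as in the $\sqrt{\epsilon}$ picture. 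With that adjustment (or with your alternative of a curved hypersurface bulging into $\{f<0\}$, which sidesteps the scaling entirely), the argument is complete in outline.
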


Finally, let us verify that \eqref{gl33} holds true. Let $R'<R_1<R_2<R$. Let $\varphi\in C_0^\infty(B_{R_2})$,
equal to $1$ on $B_{R_1}$. Let $\delta>0$ small and $\mathcal D=\{w\in \mathbb R^N, \ \vert w\vert \leq \delta\}$.
For $w\in \mathcal D$,  we deform the real ball $B_R$ into the  countour $\Sigma_w$
$$ \Sigma_w=\{ z\in \mathbb C^d, \quad \exists x \in B_R, \quad z= x+iw \varphi(x)\} \ .$$
By the first part of the proof of Proposition \ref{prop5}, that we apply with some $R'\in ]R_2,R[$,
if $\delta$ is small enough, the function $u$ extends holomorphically near any $z\in \Sigma_w, w\in \mathcal D$.
Let $u_w(x)=u(x+iw \varphi(x))$. Then $u_w\in L^2(B_R)$ and one has $Q_w(u_w)=0$ where
$Q_w$ is the operator induced by $Q$ on $\Sigma_w$. \\
One has $Q_w\in \mathcal Q(X',D'_0,d'_0)$
with $(X',D'_0,d'_0)$ close to $(X,D_0,d_0)$ if $\delta$ is small enough.
Now, we apply Proposition \ref{prop4} with $R'=R_2$. We get in particular
$$ \sup_{w\in\mathcal D}\sup_{x\in B_{R_1}} \vert u(x+iw)\vert \leq C \Vert u\Vert_{L^2(B_R\setminus B_{R_2})} .$$
By taking $C_1$ small enough, this implies that \eqref{gl32} holds true.

\end{proof}

\subsection{Estimates on solutions of the Poisson equation}
Let $f \in L^2(\R^d; \sqrt{\det g} \ud x)$ and  $u_\pm(s,x)=\mathbb{P}_{s,\pm}(f)(x)$
the solution of the Poisson equation \eqref{eq: Poisson boundary problem}.
Let $s_0>0$.
The goal of this subsection is to prove the following proposition

\begin{proposition}\label{prop6}
There exists   constants $b>0, C>0$ independent of $f$ such  that, 
$u_\pm (s_0,.)$ extends as an holomorphic function in the set \\
$U_b=\{z\in \mathbb C^d, \vert Im(z)\vert <b\}$. Moreover $u(s_0,z)\in \mathcal H_b$ and one has
\begin{equation}\label{gl60}
\int_{z\in U_b}\vert u_\pm (s_0,z)\vert^2 \vert dz\vert \leq C\Vert f\Vert^2_{L^2}. 
\end{equation}
\end{proposition}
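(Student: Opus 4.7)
The plan is to view $u_\pm(s,x)$ as the solution of an elliptic equation in the $(d+1)$ real variables $(s,x)$, apply Proposition \ref{prop5} locally on balls centred at $(s_0, k)$ with $k$ running over a lattice in $\R^d$ and with constants uniform in $k$, and then sum the resulting local bounds using the energy inequality coming from the functional calculus. The operator $P := -\partial_s^2 + H_{g,V} = -\partial_s^2 - \Delta_g + V(x)$ is a second-order elliptic operator in $(s,x) \in \R^{d+1}$ with principal symbol $\tau^2 + g^{ij}(x)\xi_i\xi_j$; its coefficients depend only on $x$ and, by \eqref{eq: metric analytic}--\eqref{eq: metric perturbation by a symbol}, extend holomorphically to $\comp_\tau \times U_a$ with uniform $L^\infty$ bounds. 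For $a' \in (0,a)$ small enough, $\Re[g^{ij}(z)\xi_i\xi_j]$ is bounded below by a positive constant times $|\xi|^2$ uniformly in $z$ with $|\Im(z)| < a'$, because $\tilde g \to 0$ at infinity. Moreover, since $Pu_\pm = 0$ on $(0,\infty) \times \R^d$ and $|e^{-s\sigma_\pm^{1/2}}| \leq 1$ for $s \geq 0$, the functional calculus \eqref{eq: calcul fonctionnel} yields the energy bound
$$\|u_\pm(s,\cdot)\|_{L^2(\R^d;\sqrt{\det g}\ud x)} \leq \|f\|_{L^2(\R^d;\sqrt{\det g}\ud x)}, \qquad \forall s \geq 0.$$

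Next, I would fix $R \in (0, \min(s_0, a')/4)$ and $R' \in (0, R)$, and choose a lattice $(k_j)_j \subset \R^d$ so that the balls $\{|x - k_j| < R'/2\}$ cover $\R^d$ with uniformly finite multiplicity. Translating $(s,x) \mapsto (s + s_0, x + k_j)$, the operator $P$ becomes a member of a common class $\mathcal{Q}(X_0, D_0, d_0)$ in the sense of Section \ref{sec: holomorphic extension multiD}, where $X_0 \subset \{|\Im(z)| < a'/2\}$ is a fixed complex neighborhood of $\overline{B_R} \subset \R^{d+1}$: the uniform $L^\infty$ bound $D_0$ on the coefficients follows from \eqref{eq: metric perturbation by a symbol}, and the uniform lower bound $d_0$ on the ellipticity constant from the asymptotic flatness of $g$. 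Proposition \ref{prop5} then supplies constants $C_1, C_2 > 0$ independent of $j$ such that $u_\pm$ extends holomorphically to the translate of the set $Y$ centred at $(s_0, k_j)$, with
$$\sup_{(\tau, z) \in (s_0, k_j) + Y} |u_\pm(\tau, z)|^2 \leq C_2^2 \|u_\pm\|^2_{L^2(B_R(s_0, k_j))}.$$
Restricting to $\tau = s_0$ gives a holomorphic extension of $u_\pm(s_0, \cdot)$ on $\{|\Im(z)| < b,\ |\Re(z) - k_j| < R'/2\}$ for some fixed $b > 0$ independent of $j$; as $j$ varies, these regions cover $U_b$, producing the desired holomorphic extension on the whole slab.

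Finally, integrating the pointwise sup bound over each slab of real volume $\simeq 1$ and summing over $j$,
$$\int_{U_b} |u_\pm(s_0, z)|^2 |\ud z| \leq C_3 \sum_j \|u_\pm\|^2_{L^2(B_R(s_0, k_j))} \leq C_4 \int_{s_0-R}^{s_0+R} \|u_\pm(s,\cdot)\|^2_{L^2(\R^d)} \ud s \leq C \|f\|^2_{L^2},$$
where the middle inequality uses Fubini together with the uniformly finite overlap of the real balls $B_R(s_0, k_j) \subset \R^{d+1}$, and the last inequality is the energy bound of the first step (recalling $\sqrt{\det g}$ is bounded above and below by positive constants). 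The main technical point I expect is securing the uniformity in $j$ of the constants $C_1, C_2$ from Proposition \ref{prop5}; this reduces precisely to checking that all translates of $P$ lie in one and the same class $\mathcal{Q}$, which is granted by the symbol estimates \eqref{eq: metric perturbation by a symbol} together with the asymptotic flatness of the metric. Everything else is a covering-and-summation argument of the same flavour as the proof of Proposition \ref{proposition:interpolation inequality multiD}.
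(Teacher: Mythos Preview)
Your proposal is correct and follows essentially the same approach as the paper: translate the elliptic operator $-\partial_s^2+H_{g,V}$ by lattice points in $\R^d$, observe that all translates lie in a single class $\mathcal Q(X,D_0,d_0)$ thanks to \eqref{eq: metric and potential holomorphically extended}--\eqref{eq: metric perturbation by a symbol}, apply Proposition~\ref{prop5} with uniform constants, and sum the resulting local bounds using the energy inequality $\|u_\pm(s,\cdot)\|_{L^2}\leq \|f\|_{L^2}$. The paper proceeds identically (first for a continuous translation parameter $w\in\R^d$, then specializing to a lattice), so there is no substantive difference.
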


\begin{proof}

Recall that $u_\pm(s, x)$ is a  solution of the elliptic equation 
\begin{equation}
(- \partial_s^2 + H_{g,V} )u_\pm = 0, \quad \textrm{ in } (0, \infty) \times \R^d.
\label{eq:elliptic equation holomorphic extension}
\end{equation} 
We first choose $R\in ]0,s_0/2]$. We denote here by $B_R\subset \mathbb R^{d+1}$ the ball 
$$ B_R=\{(\sigma,x)\in \mathbb R^{d+1}, \quad \sigma^2+\vert x\vert^2 <R^2\} \ .$$
For  $w\in \mathbb R^d$, we define the function $u_w(\sigma,x)$ by the formula
$$ u_w(\sigma,x)=u_\pm(s_0+\sigma, w+x) \ .$$
Then one has $u_w\in L^2(B_R)$, and $u_w$ satisfies the equation
$$ Q_w (u_w)=0 \quad \text{on} \quad B_R $$
with $-Q_w=\tau_{(-s_0, -w)}( - \partial_s^2 +H_{g,V}) \tau_{(s_0,w)}$ where
$\tau_{(s_0,w)}$ is the translation by $(s_0,w)$. By hypothesis \eqref{eq: metric and potential holomorphically extended} and \eqref{eq: metric perturbation by a symbol} there exist $(X,D_0,d_0)$ such that 
$Q_w\in \mathcal Q(X,D_0,d_0)$ for all $w$. Let $R'<R$. By Proposition \ref{prop5}, there exist $b>0$ and $C>0$
independent of $w\in \mathbb R^d$ such that
$$ \sup_{\vert x\vert <R', \vert y\vert <b} \vert u_w(0, x+iy) \vert^2\leq 
C\int_{B_R} \vert u_w(\sigma,x) \vert^2 d\sigma dx\ .$$
This implies

\begin{equation}\label{gl61}
\sup_{\vert y\vert <b} \int_{B_{R'}} \vert u_\pm(s_0, w+x+ iy)\vert ^2 dx \leq
C\int_{B_R} \vert u_\pm(s_0+\sigma, w+x) \vert^2 d\sigma dx.
\end{equation}
Applying \eqref{gl61} at points $w_k= hk, \ k\in \mathbb Z^d$ with $h$ small enough
and adding all these inequalities, we get with a different constant $C$
 
 \begin{equation}\label{gl62}
\sup_{\vert y\vert <b} \int_{\mathbb R^d} \vert u_\pm(s_0, x+ iy)\vert ^2 dx \leq
C\int_{\mathbb R^d}\int_{s\in [s_0-R',s_0+R']} \vert u_\pm(s, x) \vert^2 ds dx.
\end{equation}
This proves $u_\pm(s_0,z)\in \mathcal H_b$ and \eqref{gl60} follows from
$\Vert  u_\pm(s, .) \Vert_{L^2(\mathbb R^d)}\leq \Vert f\Vert_{L^2(\mathbb R^d)}$ for all $s>0$.
\end{proof}

\section{Proof of Theorem \ref{thm: Spectral inequality} }
\label{sec: End of the proof multiD}

Let $\mu \in \R$ and let $f \in L^2(\R^d;\sqrt{\det g} \ud x)$ be such that $f = \Pi_{\mu}(g,V)f$. 
Take  $s_0>0$. Since the support of $U(f)$ is contained in $\sigma < \mu$, we can define the function 
$$h=U^{-1}\big( e^{s_0\sigma_\pm^{1/2}}U(f)\big) $$
which satisfies
$$
\Vert h\Vert_{L^2}\leq \vert e^{s_0 \mu_\pm ^{1/2}}\vert \Vert f\Vert_{L^2} \ .
$$
The function  $u_\pm (s,.) = \mathbb{P}_{s,\pm}(h)$ is solution of the Poisson equation with data $h$ on 
$s=0$ and one has by construction 
\begin{equation}
f=u_\pm (s_0, .) \ .
\end{equation} 
By Proposition \ref{prop6}, there exists $b>0$ such that $f\in \mathcal H_b$ and one has

\begin{equation}\label{gl100}
\int_{z\in U_b}\vert f(z)\vert^2 \vert dz\vert \leq C\Vert h\Vert^2_{L^2} \leq C 
\vert e^{2s_0 \mu_\pm ^{1/2}}\vert \Vert f\Vert^2_{L^2}.
\end{equation}
By Proposition \ref{proposition:interpolation inequality multiD}, we get that there exists $\nu>0$ such that 

\begin{equation}\label{gl101}
\int_{\R^d} |f|^2 \ud x \leq C \left(  \int_{\omega} |f|^2 \ud x \right)^{\nu} \left(  \int_{U_b} |f|^2 |\ud z| \right)^{1 - \nu},
\end{equation} 
From \eqref{gl100} and \eqref{gl101} we get

$$ \int_{\R^d} |f|^2 \ud x \leq C \vert e^{\frac{2(1-\nu)s_0 \mu_\pm ^{1/2}}{\nu}}\vert  \int_{\omega} |f|^2 \ud x \ .$$
The proof of Theorem \ref{thm: Spectral inequality}  is complete.

\bibliographystyle{plain}                            
{}

\end{document}